\newcommand{\Rn}{\mathds{R}^{n}}
\newcommand{\R}{\mathds{R}}
\newcommand{\vect}[1]{\mathbf{#1}}
\newcommand{\norm}[1]{\left\lVert#1\right\rVert}
\newtheorem{definition}{Definition}
\newtheorem{lemma}{Lemma}
\newtheorem{corollary}{Corollary}
\newtheorem{theorem}{Theorem}
\newtheorem{proposition}{Proposition}
\newtheorem*{lem}{Lemma}
\theoremstyle{remark}
\newtheorem{remark}{Remark}
\def\@maketitle{%
  \newpage
  \null
  \vskip 2em%
  \begin{center}%
  \let \footnote \thanks
    {\Large\bfseries \@title \par}%
    \vskip 1.5em%
    {\normalsize
      \lineskip .5em%
      \begin{tabular}[t]{c}%
        \@author
      \end{tabular}\par}%
    \vskip 1em%
    {\normalsize \@date}%
  \end{center}%
  \par
  \vskip 1.5em}
\begin{document}
\title{Deterministic versus stochastic consensus dynamics on graphs}

\author{Dylan Weber \thanks{Electronic address: \texttt{djweber3@asu.edu}}}
\affil{School of Mathematical and Statistical Sciences, Arizona State University, Tempe}

\author{Ryan Theisen \thanks{Electronic address: \texttt{theisen@berkeley.edu}}}
\affil{Department of Statistics, University of California, Berkeley}

\author{Sebastien Motsch \thanks{Electronic address: \texttt{smotsch@asu.edu}}}
\affil{School of Mathematical and Statistical Sciences, Arizona State University, Tempe}

\date{Dated: \today}
\title{Deterministic versus stochastic consensus dynamics on graphs}

\maketitle

\begin{abstract}
We study two agent based models of opinion formation - one stochastic in nature and one deterministic.  Both models are defined in terms of an underlying graph; we study how the structure of the graph affects the long time behavior of the models in all possible cases of graph topology.  We are especially interested in the emergence of a \textit{consensus} among the agents and provide a condition on the graph that is necessary and sufficient for convergence to a consensus in both models.  This investigation reveals several contrasts between the models - notably the convergence rates - which are explored through analytical arguments and several numerical experiments.
\end{abstract}

\tableofcontents

\section{Introduction}


Mathematical models involving dynamics defined on network structures have long been objects of theoretical interest.  The rise of social networks as the ubiquitous forum for the exchange of information in our society and the meta data associated with these networks clearly demonstrates the need to 
analyze dynamics on networks. Such models are often posed in an agent based framework where the potential for agents to interact is encoded in a \textit{graph}; models of opinion formation are especially amenable to such a framework and a considerable amount of research has been done in this context \cite{olfati-saber_consensus_2007, xia_opinion_2011, lorenz_continuous_2007, castellano_statistical_2009, hegselmann_opinion_2002}.  Many of the dynamics studied are deterministic in nature \cite{spanos_dynamic_2005, blondel_krauses_2009, ben-naim_unity_2003, hegselmann_opinion_2002}, however  the random nature of information exchange among humans strongly motivates the study of similar models in a stochastic setting \cite{deffuant_mixing_2000, aldous_interacting_2013, lanchier_stochastic_2013}.  In this paper we investigate two models of opinion formation, one stochastic and one deterministic with special attention paid to the emergence of \textit{consensus} - when the opinions of all agents agree.

A hallmark of the agent-based approach is the investigation of how locally defined interactions affect global behavior observed among the entire population of agents.  In the context of opinion formation, local interactions are determined by the underlying network structure; therefore the investigation of the long time behavior of the dynamics translates to understanding how the topology of the underlying graph affects the distribution of opinions among the agents.  Of particular interest is how the interplay between the local ``rules of engagement'' and the structure of the graph affect the emergence of a consensus among the agents \cite{motsch_heterophilious_2014, saber_consensus_2003}. Deterministic models often carry the generic assumption that the opinion of a given agent is continuously influenced by those to whom it is connected in the graph.  These connections can be considered to be static or changing often depending on the state of agents. The manner in which influence is exerted is usually globally defined - most models carry the assumption of local consensus i.e. if agents interact they agree in some sense.  As the analysis of these dynamics can often be quite delicate, much of the analytical work done in the past uses assumptions on the graph such as symmetry of connections or connectedness of the graph \cite{saber_consensus_2003, spanos_dynamic_2005, yu_second-order_2010, olfati-saber_consensus_2007}.  However, in many cases the dynamics are too complex for analytical proof and main characteristics of the dynamics are identified through numerical simulation \cite{carro_role_2012, ben-naim_unity_2003, motsch_heterophilious_2014, deffuant_comparing_2006}.  A main takeaway in deterministic models is that a sufficient condition for consensus is the persistence of a suitable amount of connectivity in the underlying graph throughout the evolution of the dynamics \cite{yu_second-order_2010, ren_consensus_2005, jabin_clustering_2014}.

To incorporate stochastic behavior into opinion dynamics, several strategies are available. For instance, given a deterministic model, opinions can be perturbed with white noise \cite{pineda_noisy_2013, carro_role_2012} to model uncertainty in the agent.  Additionally the edges of the interaction graph might change randomly (possibly in a state dependent manner) to model uncertainty in agent interaction \cite{von_brecht_swarming_2012, deffuant_mixing_2000,  castellano_incomplete_2003, ben-naim_unity_2003}.  The case where the interaction graph remains fixed, and interactions among neighbors are randomized is especially well studied as the classical voter model falls into this class.  In this case assumptions on the structure of the graph are usually made - often that it is a regular lattice. The stochastic nature of these models causes their analysis to require a different set of mathematical tools, most frequently from the theory of Markov chains and martingales. Similar to the deterministic case, a sufficient degree of interaction among agents is often necessary for a consensus to emerge \cite{lanchier_stochastic_2017, lanchier_stochastic_2013, aldous_reversible_2002}.






In both cases we are interested in how the structure of the graph affects the long-time behavior of the model.  We are particularly interested in conditions on the graph that cause the opinion of every agent to converge to a \textit{consensus} - i.e. every agent converges on the same opinion in the long-time limit. Moreover we analyze if the addition of stochasticity helps or hinders the emergence of a consensus; if consensus occurs, we are interested in how the connectivity of the graph and stochasticity affect the speed of convergence. We examine all possible graph topologies: i) undirected and connected (referred to as \textit{simple} graph), ii) directed and strongly connected, and iii) directed and weakly connected or disconnected.

In the deterministic case we find that the model converges regardless of graph structure.  In the case that the underlying graph is simple, we find that a consensus is reached that is the \textit{initial average} of opinions. Similarly, if the graph is strongly connected, but not necessarily undirected, we again find that a consensus emerges, however it is not in general the initial average. The most challenging case occurs when the underlying network is only weakly connected. In general, a consensus does not form. However, we are able to find necessary and sufficient conditions on the structure of the graph for convergence to a consensus regardless of the initial condition: the graph needs to have only one \textit{isolated strongly connected component}.


In the stochastic case we similarly find consensus in the case of a strongly connected network using markedly different techniques in the arguments.  However in contrast to the deterministic model, the stochastic model does not converge in general in the case of a weakly connected graph. Despite this, we find that the condition that the graph have exactly one isolated strongly connected component prevails and is equivalent to convergence to a consensus in the stochastic case as well.  We explore this link between the models further as we find that the deterministic dynamics can be recovered in expectation from the stochastic dynamics.

We discuss speed of convergence for both the deterministic and stochastic models and quantify it in terms of the \textit{algebraic connectivity} of the graph, also known as the Fielder number\cite{fiedler_algebraic_1973, kolokolnikov_maximizing_2015, kolokolnikov_algebraic_2014}, which also happens to be the spectral gap of the operator associated with consensus dynamics.  The algebraic connectivity measures in a sense, how \textit{well connected} the graph is and we find generally that better connection in the graph results in faster convergence of the models.  However, we find a contrast between the models as the convergence rate depends on the number of agents in the stochastic case whereas the rate in the deterministic case depends only on the connectivity of the graph. In short, the spectral gap reduces drastically in the stochastic model. 

This study could be extended through the inclusion of more subtle assumptions about agent-agent interactions via non-linear dynamics.  Adding nonlinearities significantly complicates the investigation of consensus as it becomes more difficult to find sufficient conditions for the graph to remain suitably connected \cite{motsch_heterophilious_2014}. We are also interested in examining the dynamics from a control perspective - we include a brief comment discussing how given control over the network structure, the deterministic dynamics we study can be used as an algorithm to induce consensus to any opinion in the convex hull of initial agent opinions \cite{aydogdu_interaction_2017}. Finally, we aim to study limiting behavior of the model as the number of agents approaches infinity to develop a global description of the dynamics through a partial differential equation derived possibly through a kinetic description of the model \cite{dornic_critical_2001, herty_large_2014}.

\section{Opinion dynamics: a deterministic model}

\subsection{Model definition and preliminary properties}

We first study the following deterministic consensus model.
\begin{definition}
Given a collection of N agents, let $s_{i}\in \mathbb{R}$ represent the opinion of the $i$th agent.  The \textbf{consensus model} (CM) is defined by the dynamics:
\begin{equation}\label{model_agent}
  s_{i}'=\sum\limits_{j\neq i}a_{ij}(s_{j}-s_{i})\quad\text{where}\quad a_{ij}\geq0
\end{equation}
In vector notation the dynamics can be written as:
\begin{equation}\label{model_vector}
  \dot{\vect{s}}=-L\vect{s},
\end{equation}
where
  \begin{equation}
    \label{additive_condition}
    L = \left[
      \begin{array}{cccc}
        \sigma_1 & & & \\
                 &\ddots & -a_{ji} & \\
                 & -a_{ij} & \ddots & \\
                 &  & & \sigma_N
      \end{array}
    \right]
  \end{equation}
  and
  \begin{equation}  \label{eq:sigma_deter}
    \sigma_i = \sum_{j=1,j\neq i}^N a_{ij}.
  \end{equation}
\end{definition}
For the sake of clarity of presentation we restrict opinions to being one dimensional. We note that the consensus model can be easily extended for opinions $s_i$ in $\mathbb{R}^n$ and that all results discussed will still hold with analogous proofs.



 If we interpret $a_{ij}$ as measuring the amount of influence agent $j$ exerts on agent $i$, (in particular, in this interpretation if $a_{ij}=0$ then agent $j$ does not influence agent $i$) then we can interpret the matrix $L$ as encoding the structure of a network on which the agents are interacting.  In this interpretation, each vertex of the graph $G$ represents an agent and an edge from vertex $i$ to vertex $j$ represents that agent $i$ is exerting influence on agent $j$ directly, i.e. that $a_{ji}>0$.  When one agent influences another directly, the influenced agent adjusts its opinion towards the influencer's at a rate proportional to how much influence is being exerted.

 \begin{remark}
We note that $L$ is the transpose of the \textit{Laplacian matrix} of the directed graph $G$ (with the convention that the diagonal entries of $L$ measure the \textit{indegree} of the corresponding vertex in $G$.) 
\end{remark}

In this paper we will study how different conditions on the matrix $L$ affect the behavior of the model.  We will mostly be concerned with whether a \textit{consensus} is formed among the agents, i.e. if the opinions of every agent converge to the same value.

\begin{definition}
  We say that the consensus model converges to a \textbf{consensus} if there exists $\alpha\in\R$ such that:
  \begin{equation}\label{consensus}
    \lim_{t\rightarrow +\infty}\vect{s}(t)=\alpha {\bf 1},
  \end{equation}
  where ${\bf 1}=(1,1,..,1)^{T}$.
\end{definition}

We will also be interested in convergence to a consensus that is \textit{unconditional}.

\begin{definition}
The consensus model \textbf{converges to a consensus unconditionally} if a consensus is reached for any choice of the initial state $\vect{s}(0)$.
\end{definition}

Since $L$ encodes the structure of a network, the conditions we place on $L$ can be viewed as conditions on the structure of $G$, the network on which the agents are interacting.  We will show that in the case where no extra assumptions are made about $L$ that the model always converges, although not necessarily to a consensus. We find that in the case where $L$ is irreducible that the model converges unconditionally to a consensus; if we add the assumption that $L$ is symmetric we find that the opinion the agents convene on is the mean of their \textit{initial} opinions.  Assuming that $L$ is irreducible is equivalent to assuming that $G$ is a directed graph which is \textit{strongly connected}.  Adding the assumption that $L$ is symmetric is equivalent to assuming that $G$ is a connected undirected graph (often called a \textit{simple graph}) where every interaction between agents is mutual.  However, we find that these assumptions - while sufficient for unconditional convergence to a consensus - aren't necessary and can be weakened. We finally present conditions on $L$ that are equivalent to unconditional convergence to a consensus and are weaker then irreducibility.  In terms of $G$ these conditions are equivalent to demanding that $G$ be  \textit{weakly connected} with only one isolated strongly connected component.  In Fig.~\ref{fig:four_cases}, we give an illustration of these cases of study for $L$ and two examples of the time evolution of opinions are plotted in Fig.~\ref{fig:evolution_2_cases}. Similar work including nonlinearities in the case of undirected connected graphs can be found in \cite{saber_consensus_2003,spanos_dynamic_2005}.

    \begin{figure}[p]
      \centering
      \includegraphics[scale = .5]{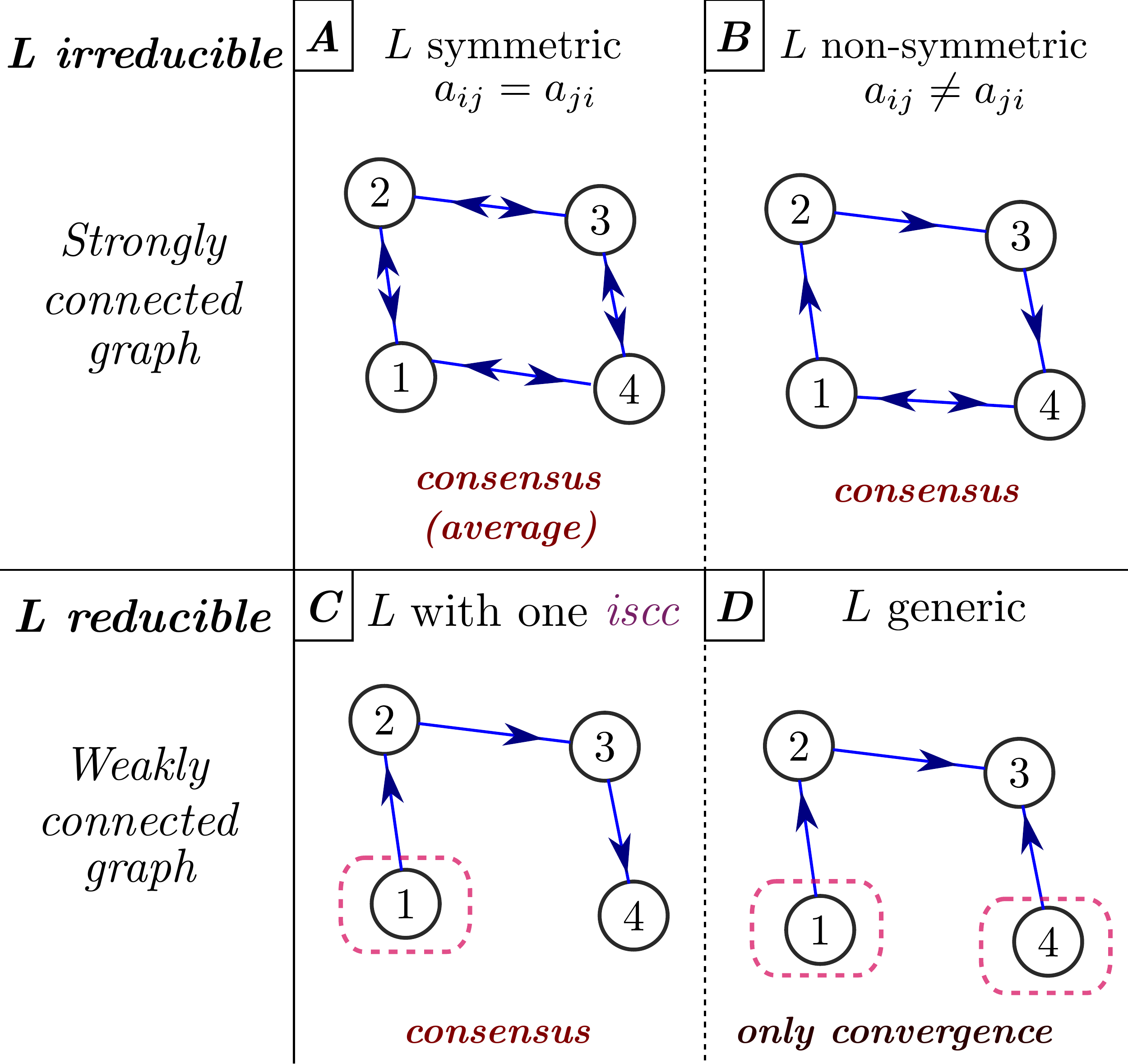}
      \caption{The four cases (A, B, C, D)  for the structure of the network: from the strongest assumption on $L$ (symmetric irreducible) to a generic matrix ({\it weakly} connected or disconnected). A sufficient condition to reach consensus is to have $L$ with only one {\it isolated strongly connected component}.}
      \label{fig:four_cases}
    \end{figure}

    \begin{figure}[p]
      \centering
      \includegraphics[width=.47\textwidth]{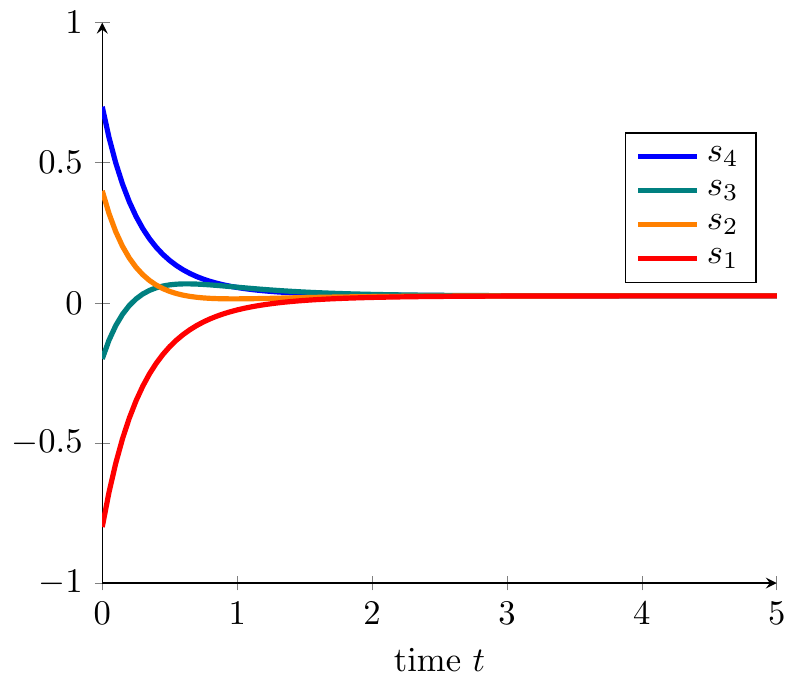} \quad
      \includegraphics[width=.47\textwidth]{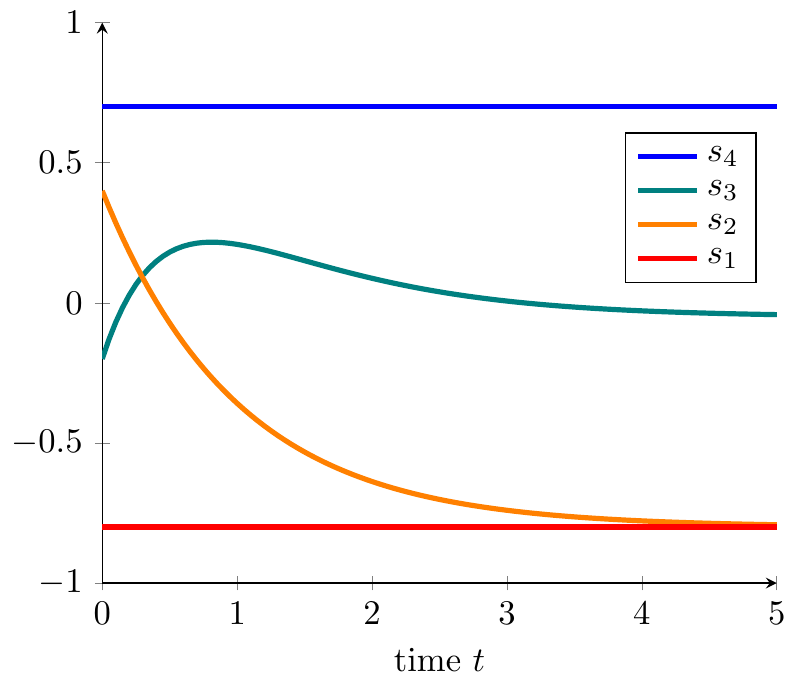}
      \caption{Evolution of the opinions $s_{i}(t)$ using an irreducible symmetric $L$ (case A figure \ref{fig:four_cases}) and a reducible $L$  (case D figure \ref{fig:four_cases}) from Fig.~\ref{fig:four_cases}. Notice that agents $\set{1}$ and $\set{4}$ do not directly or indirectly influence each other.}
      \label{fig:evolution_2_cases}
    \end{figure}

Before investigating the cases discussed above we present some general properties of $L$.  $L$ has a special structure without making any assumptions on its coefficients.

\begin{proposition}
  \label{ppo:gershgorin}
  The matrix $L$ is a diagonal dominant matrix, its eigenvalues satisfy $\text{Re}(\lambda_i)\geq 0$ and no eigenvalue $\lambda_i$ is purely imaginary except zero.
\end{proposition}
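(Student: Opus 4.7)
The plan is to handle the three claims in sequence, using the fact (made transparent by the label \texttt{gershgorin}) that everything follows from Gershgorin's circle theorem once the row sums of $L$ are computed.

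First I would verify diagonal dominance directly from the definition. The $i$th diagonal entry is $L_{ii} = \sigma_i = \sum_{j\neq i} a_{ij}$, and the off-diagonal entries of row $i$ are $-a_{ij}$ with $a_{ij}\geq 0$. Hence
\begin{equation*}
|L_{ii}| = \sigma_i = \sum_{j\neq i} a_{ij} = \sum_{j\neq i}|L_{ij}|,
\end{equation*}
so each row is (weakly) diagonally dominant with equality. This also shows that $L\mathbf{1}=0$, so $0$ is always an eigenvalue of $L$ (with $\mathbf{1}$ as right null vector for the transpose; the original null vector sits on the right of $L$ because the row sums vanish).

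Next I would apply Gershgorin's theorem. Every eigenvalue $\lambda$ of $L$ lies in the union of the discs
\begin{equation*}
D_i = \{\, z \in \mathbb{C} : |z - \sigma_i| \leq \sigma_i \,\},\qquad i=1,\dots,N,
\end{equation*}
where the radius equals the off-diagonal row-sum $\sum_{j\neq i}|{-}a_{ij}| = \sigma_i$. Because each $D_i$ is a closed disc of radius $\sigma_i$ centred at the nonnegative real point $\sigma_i$, it is contained in the closed right half-plane $\{\mathrm{Re}(z)\geq 0\}$, which yields $\mathrm{Re}(\lambda_i)\geq 0$.

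For the third claim I would show that $D_i$ meets the imaginary axis only at the origin. Setting $z = \mathrm{i}b$ for $b\in\mathbb{R}$ and expanding gives
\begin{equation*}
|\mathrm{i}b - \sigma_i|^2 = \sigma_i^2 + b^2,
\end{equation*}
so the inequality $|z-\sigma_i|\leq \sigma_i$ forces $b=0$. Hence the only possible purely imaginary eigenvalue is $\lambda=0$. There is no serious obstacle in this argument; the only small thing to be careful about is the degenerate case $\sigma_i = 0$, for which $D_i$ collapses to $\{0\}$ and the conclusion holds trivially. The rest is just unwinding the definition of $L$ and invoking Gershgorin.
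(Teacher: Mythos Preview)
Your argument is correct and is essentially the same as the paper's: both compute the row sums to get diagonal dominance with equality, invoke Gershgorin to place the spectrum in the discs $B(\sigma_i,\sigma_i)$, and observe that these discs touch the imaginary axis only at the origin. Your write-up is a bit more explicit (the calculation $|\mathrm{i}b-\sigma_i|^2=\sigma_i^2+b^2$ and the degenerate case $\sigma_i=0$), but there is no real difference in strategy.
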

\begin{proof} The diagonal entries of $L$ satisfy $\sigma_i=\sum_{j,j\neq i} a_{ij}$ where $a_{ij}>0$, thus summing over each row will give zero and we deduce that $L$ is diagonal dominant.  By the Gershgorin disc theorem the eigenvalues $\lambda_i$ are contained in the closed discs $B(\sigma_i,\sigma_i)$ (see Fig.~\ref{fig:gershgorin}). Thus, the eigenvalues $\lambda_i$ are either $0$ or have a real part strictly positive. Moreover, $0$ is always an eigenvalue of $L$ as the constant vector ${\bf 1}=(1,\dots,1)^T$ is always an eigenvector of $L$ associated with the eigenvalue $\lambda=0$. 
\end{proof}

\begin{figure}[ht]
  \centering
  \includegraphics[width=.47\textwidth]{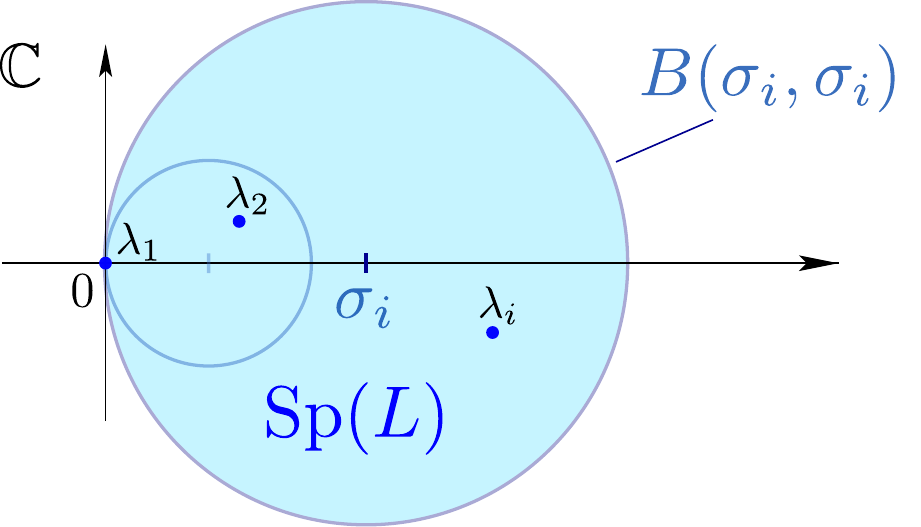}
  \caption{The spectrum of $L$, denoted $Sp(L)$, is contained in the Gershgorin discs. Thus, the eigenvalues of $L$ are either zero ($\lambda_1$ in this representation) or have a real part strictly positive. }
  \label{fig:gershgorin}
\end{figure}

\subsection{Strongly connected networks ($L$ irreducible)}

We first study the case where $L$ is assumed to be irreducible (see Fig.~\ref{fig:four_cases} {\bf A} and {\bf B}).  In terms of the network $G$ on which the agents interact, this means that $G$ is \textit{strongly connected}.  Intuitively we are assuming that given any two agents $i$ and $j$, that agent $i$ exerts influence over agent $j$ (either directly or indirectly) and vice versa.  Formally, in terms of $L$, we introduce the following definitions.

\begin{definition}
  Let the set of $N$ agents be given by $A$. We say that agent $i$ is influencing agent $j$ if there exists a path $q_1,\,\dots,,q_k\subseteq A$ such that:
  \begin{equation}
    \label{eq:path}
    q_1=i,\quad q_k=j,\quad \text{and } a_{q_i,q_{i+1}} >0 \text{ for all } 1\leq i \leq k-1.
  \end{equation}
  and we write $i \sim j$.
\end{definition}

In other words, $L$ is irreducible if and only if for any $i\neq j$ we have both $i\sim j$ and $j\sim i$.

\begin{definition}
  A graph $G$ is said to be {\it strongly connected} if for any vertices $i$ and $j$:
  \begin{equation}
    \label{eq:strongly_connect}
    i\sim j \quad \text{\bf and} \quad j\sim i.
  \end{equation}
  A graph $G$ is said to be {\it weakly connected} if for any vertices $i$ and $j$:
  \begin{equation}
    \label{eq:weakly_connect}
    i\sim j \quad \text{\bf or} \quad j\sim i.
  \end{equation}
\end{definition}

The information about the eigenvalues of $L$ given in Proposition~\ref{ppo:gershgorin} is not sufficient to characterize the long-term behavior of the model.  We need to investigate the algebraic multiplicity of the zero eigenvalue of $L$.  Since we are assuming that $L$ is irreducible we may leverage the Perron-Frobenius theorem to deduce that this eigenvalue is in fact simple.

\begin{lemma}\label{lem:simple}
  If $L$ is irreducible then the eigenvalue $0$ is simple.
\end{lemma}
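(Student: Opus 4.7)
The plan is to reduce the statement to a direct application of the Perron--Frobenius theorem by an appropriate shift that turns $L$ into a non-negative matrix, and then transfer simplicity of the Perron eigenvalue back to the $0$ eigenvalue of $L$.

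First I would choose a real number $c > \max_i \sigma_i$ and consider $M = cI - L$. By the explicit form of $L$, the entries of $M$ are
\begin{equation*}
M_{ii} = c - \sigma_i > 0, \qquad M_{ij} = a_{ij} \geq 0 \quad (i \neq j),
\end{equation*}
so $M$ is an entrywise non-negative matrix. Since irreducibility of a matrix depends only on the pattern of its non-zero off-diagonal entries, $M$ is irreducible if and only if $L$ is. In the language of the paper's definitions, $L$ irreducible means that for every pair $i \neq j$ one has $i \sim j$, which is precisely the condition that the directed graph associated with the non-negative matrix $M$ is strongly connected.

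Next I would identify the Perron eigenvalue. Because $L\mathbf{1}=0$ (each row of $L$ sums to zero, by definition of $\sigma_i$), we have $M\mathbf{1} = c\mathbf{1}$, so every row of $M$ sums to $c$. For a non-negative matrix with constant row sum $c$, the spectral radius equals $c$. Thus $c = \rho(M)$, and $\mathbf{1}$ is an associated eigenvector. Applying the Perron--Frobenius theorem to the irreducible non-negative matrix $M$, the spectral radius $\rho(M) = c$ is a simple eigenvalue (algebraic multiplicity one).

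Finally I would transfer this back to $L$. Since $\mu$ is an eigenvalue of $M$ if and only if $\lambda = c - \mu$ is an eigenvalue of $L$, with the same algebraic multiplicity (the characteristic polynomials are related by a shift of the variable), simplicity of $c$ for $M$ yields simplicity of $0$ for $L$. The main potential obstacle is the careful justification that $\rho(M)=c$ (rather than some larger value) and that irreducibility is preserved under the shift; both are handled by the row-sum observation and the fact that the shift changes only diagonal entries, respectively. Everything else is a straightforward consequence of Perron--Frobenius.
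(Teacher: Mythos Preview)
Your proof is correct and takes essentially the same approach as the paper: shift $L$ by a scalar multiple of the identity and apply Perron--Frobenius to the resulting irreducible matrix to conclude simplicity of the dominant eigenvalue, then transfer back. The paper works with $D=L-cI$ and picks $c$ in terms of the eigenvalues of $L$ so that $-c$ has maximal modulus among the eigenvalues of $D$, whereas you take $M=cI-L$ with $c>\max_i\sigma_i$ to make $M$ non-negative and read off $\rho(M)=c$ directly from the constant row sums; your execution is a bit cleaner since it avoids reasoning about the (a priori unknown) eigenvalues of $L$.
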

\begin{proof}
  By Proposition \ref{ppo:gershgorin} we have that the real parts of the eigenvalues of $L$ are all nonnegative and there are no purely imaginary eigenvalues.  We also note that 0 is an eigenvalue of L of at least multiplicity one because it is associated to ${\bf 1}$. Consider the matrix $D = L-cId$ where $c\in \R$ and notice that $\lambda = a + ib$ is an eigenvalue of $L$ if and only if $\widehat{\lambda} = a - c + ib$ is an eigenvalue of $D$. Therefore, since 0 is an eigenvalue of $L$ we must have that $-c$ is an eigenvalue of $D$. If we choose $c$ such that:
  \begin{align*}
      c>\max\limits_{i}\frac{b_{i}^{2}+a_{i}^{2}}{2a_{i}},
  \end{align*}
  we have that:
  \begin{align*}
      \sqrt{(a_{i}-c)^{2}+b_{i}^{2}}<c\quad\text{for all $i$}.
  \end{align*}
  Therefore $-c$ is the eigenvalue of largest modulus of $D$ and since $D$ is irreducible we have by the Perron-Frobenius theorem that $-c$ must be simple and since the eigenvalues of $D$ are in a one to one correspondence with the eigenvalues of $L$ we must have that 0 is a simple eigenvalue of $L$ as desired.
\end{proof}

Using the properties of $L$ found above we can now exploit the following general fact from the theory of ordinary differential equations to deduce that the model converges to a consensus unconditionally in the case where $L$ is irreducible.  This result will prove useful for showing the convergence of the model and studying consensus in the case of an arbitrary choice for $L$ as well.

\begin{lemma}\label{lem:ode}
  Given a linear system defined by
  \begin{displaymath}
    \vect{x'}=A\vect{x}, \quad\vect{x}(0)=\vect{x}_{0}.
  \end{displaymath}
  Assume $A$ has a zero eigenvalue of multiplicity $m$ with $m$ linearly independent associated eigenvectors and every non-zero eigenvalue $\lambda$ of $A$ satisfies $Re(\lambda)<0$. Then
  \begin{displaymath}
    \lim_{t\rightarrow\infty}\vect{x}(t)=\vect{u}
  \end{displaymath}
  where $\vect{u}$ is in the center subspace of $A$.  
\end{lemma}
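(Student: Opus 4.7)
The plan is to prove the convergence by decomposing the phase space into invariant subspaces of $A$ and analyzing the behavior of $e^{tA}$ on each piece. The center subspace (the kernel of $A$) already has dimension $m$ by assumption, so the geometric and algebraic multiplicities of the zero eigenvalue coincide; this will ensure that $e^{tA}$ acts as the identity on this subspace, rather than producing spurious polynomial-in-$t$ factors.

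First I would invoke the existence of a Jordan decomposition for $A$ to write $\mathbb{C}^N = E_0 \oplus E_-$, where $E_0 = \ker(A)$ is the $m$-dimensional center subspace and $E_- = \bigoplus_{\lambda \neq 0} E_\lambda$ is the direct sum of the generalized eigenspaces associated with the strictly-negative-real-part eigenvalues. Both subspaces are invariant under $A$, hence under $e^{tA}$. Next I would decompose the initial condition uniquely as $\vect{x}_0 = \vect{u} + \vect{w}_0$ with $\vect{u} \in E_0$ and $\vect{w}_0 \in E_-$, so that by linearity of the flow
\begin{equation*}
  \vect{x}(t) \;=\; e^{tA}\vect{u} + e^{tA}\vect{w}_0 \;=\; \vect{u} + e^{tA}\vect{w}_0,
\end{equation*}
where the second equality uses $A\vect{u} = 0$.

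The remaining step is to show that $e^{tA}\vect{w}_0 \to 0$ as $t \to \infty$. Restricted to the generalized eigenspace $E_\lambda$ for a nonzero eigenvalue $\lambda$ with $\text{Re}(\lambda) < 0$, the matrix exponential acts as $e^{\lambda t}$ times a polynomial in $t$ coming from the nilpotent part of the Jordan block. Since $|e^{\lambda t} t^k| = t^k e^{\text{Re}(\lambda) t} \to 0$ for any $k \geq 0$, each such component vanishes in the limit, and summing over finitely many eigenvalues yields $e^{tA}\vect{w}_0 \to 0$. Combining with the previous display gives $\lim_{t\to\infty} \vect{x}(t) = \vect{u} \in E_0$, which is the claim.

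There isn't really a deep obstacle here: the statement is a standard fact about linear ODEs whose proof reduces to the Jordan normal form. The only subtlety worth flagging explicitly is the hypothesis that the zero eigenvalue has $m$ linearly independent eigenvectors; without this, one would instead have Jordan blocks at $\lambda = 0$ producing unbounded terms of the form $t^k$ (not decaying, since $e^{0 \cdot t} = 1$), and $\vect{x}(t)$ would fail to converge. So the main thing to make sure is handled carefully is that the center subspace $E_0$ is exactly $\ker(A)$ and that $e^{tA}|_{E_0}$ is the identity, which is precisely where the full-geometric-multiplicity assumption enters.
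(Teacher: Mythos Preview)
Your proposal is correct and essentially identical to the paper's proof: both use the generalized eigenspace decomposition, observe that the full-geometric-multiplicity hypothesis at $\lambda=0$ forces $e^{tA}$ to act as the identity on the kernel, and then kill the remaining components via the standard polynomial-times-decaying-exponential estimate on each Jordan block with $\text{Re}(\lambda)<0$. The only cosmetic difference is that you bundle all the nonzero eigenspaces into a single stable subspace $E_-$, whereas the paper treats each $E_{\lambda_i}$ separately.
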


\begin{proof}
 See Appendix \ref{sec:cv_linear}.
\end{proof}


Since we have shown that in the case where $L$ is irreducible that its zero eigenvalue is simple, we obtain immediately the convergence of the consensus model to a consensus in this case.

\begin{theorem}
  If $L$ is irreducible then the consensus model converges to a consensus unconditionally.
\end{theorem}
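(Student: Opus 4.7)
The plan is to apply Lemma \ref{lem:ode} directly to the system $\dot{\vect s} = -L\vect s$ with $A = -L$, and then identify the center subspace of $-L$ as the consensus line $\mathrm{span}({\bf 1})$. All the ingredients are already in place in the excerpt; the proof is essentially a matter of translating between the spectra of $L$ and $-L$ and reading off the conclusion.

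First I would verify the spectral hypotheses of Lemma \ref{lem:ode} for the matrix $A = -L$. The eigenvalues of $-L$ are precisely the negatives of the eigenvalues of $L$, so Proposition \ref{ppo:gershgorin} immediately translates into the statement that every eigenvalue $\mu$ of $-L$ satisfies $\mathrm{Re}(\mu) \le 0$, and no eigenvalue is purely imaginary except $0$ itself; equivalently, every nonzero eigenvalue $\mu$ of $-L$ satisfies $\mathrm{Re}(\mu) < 0$. Next, since $L$ is assumed irreducible, Lemma \ref{lem:simple} gives that $0$ is a simple eigenvalue of $L$, hence also of $-L$, so in the language of Lemma \ref{lem:ode} we have $m = 1$ with a single linearly independent eigenvector, and Lemma \ref{lem:ode} applies.

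The ODE lemma then yields $\lim_{t \to \infty} \vect s(t) = \vect u$ for some $\vect u$ in the center subspace of $-L$, i.e.\ the kernel of $-L$, which coincides with $\ker L$. Because $0$ is a simple eigenvalue of $L$ and $L{\bf 1} = {\bf 0}$ (the diagonal-dominance identity recorded in the proof of Proposition \ref{ppo:gershgorin}), this kernel is exactly the one-dimensional subspace $\mathrm{span}({\bf 1})$. Therefore $\vect u = \alpha {\bf 1}$ for some scalar $\alpha \in \mathbb{R}$, which is the definition of a consensus, and the argument works for any initial condition $\vect s(0)$, giving unconditional convergence.

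The only subtle point — and the one I would take care to spell out — is the sign flip between $L$ and $-L$; beyond that, no real obstacle remains, since the heavy lifting was already done in Proposition \ref{ppo:gershgorin}, Lemma \ref{lem:simple}, and Lemma \ref{lem:ode}. In particular, we do not need to identify the value of $\alpha$ here (this will be taken up separately, e.g.\ for symmetric $L$ where $\alpha$ equals the initial average); the theorem as stated requires only the existence of such an $\alpha$.
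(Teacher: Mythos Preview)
Your proposal is correct and follows essentially the same argument as the paper: both invoke Proposition~\ref{ppo:gershgorin} and Lemma~\ref{lem:simple} to verify the hypotheses of Lemma~\ref{lem:ode} for $A=-L$, and then identify the center subspace as $\mathrm{span}({\bf 1})$. Your write-up is a bit more explicit about the sign flip between $L$ and $-L$, but the logical content is identical.
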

\begin{proof}
  By Proposition \ref{ppo:gershgorin} we know that the eigenvalues of of $-L$ satisfy Re$(\lambda_{i})\leq 0$.  By Lemma \ref{lem:simple} we know that $0$ is a simple eigenvalue of $-L$ associated to ${\bf 1}$ and therefore by
  Lemma \ref{lem:ode} we can deduce that the model converges to a consensus as the center subspace of $L$ is spanned by ${\bf 1}$.
\end{proof}

\subsubsection{Connected undirected networks ($L$ symmetric)}

We now study a special case of when $L$ is irreducible by adding the assumption that $L$ is also symmetric.  Here we are implicitly assuming that every direct interaction between individuals is mutual.  We know from the last section that a consensus will be reached unconditionally, however in this case we will show that the opinion the agents convene on is the \textit{initial average of opinions}. The average opinion is defined as:
\begin{equation}
  \label{eq:bar_s}
  \bar{s}(t) := \frac{1}{N}\sum_{i=1}^{N}s_{i}(t).
\end{equation}
We first show that the average opinion is conserved by the consensus model in this case.

\begin{lemma}\label{lem:symmetric_average}
  If $L$ is symmetric then the average opinion $\bar{s}(t)$ \eqref{eq:bar_s} is preserved by the consensus model.
\end{lemma}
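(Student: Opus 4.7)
The plan is to show that $\frac{d}{dt}\bar{s}(t) = 0$, which immediately yields that $\bar{s}(t) = \bar{s}(0)$ for all $t \geq 0$. I will use the vector formulation $\dot{\mathbf{s}} = -L\mathbf{s}$ from \eqref{model_vector} together with the already-established fact that $L\mathbf{1} = \mathbf{0}$ (noted in the proof of Proposition \ref{ppo:gershgorin}).

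First I would rewrite the average as $\bar{s}(t) = \tfrac{1}{N}\mathbf{1}^T \mathbf{s}(t)$. Differentiating and substituting the dynamics gives
\begin{equation*}
\frac{d}{dt}\bar{s}(t) = \frac{1}{N}\mathbf{1}^T \dot{\mathbf{s}}(t) = -\frac{1}{N}\mathbf{1}^T L\, \mathbf{s}(t).
\end{equation*}
Now the symmetry assumption $L = L^T$ comes in: we have $\mathbf{1}^T L = (L^T \mathbf{1})^T = (L\mathbf{1})^T = \mathbf{0}^T$, since $\mathbf{1}$ lies in the kernel of $L$. Hence $\tfrac{d}{dt}\bar{s}(t) = 0$, so $\bar{s}(t) = \bar{s}(0)$ for all $t$, which is precisely the conservation statement.

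As a sanity check I would also sketch the proof at the coordinate level:
\begin{equation*}
\frac{d}{dt}\bar{s}(t) = \frac{1}{N}\sum_{i=1}^{N}\sum_{j\neq i} a_{ij}(s_j - s_i) = \frac{1}{N}\sum_{i\neq j}(a_{ji} - a_{ij})\, s_i,
\end{equation*}
after relabeling indices in the $s_j$ term, which vanishes term-by-term under the symmetry assumption $a_{ij} = a_{ji}$. This makes transparent where symmetry is used: without it, the off-diagonal asymmetries $a_{ji} - a_{ij}$ would weight the $s_i$ nontrivially and the average would not in general be preserved.

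There is no real obstacle here; the statement is essentially a one-line consequence of $\mathbf{1} \in \ker L$ combined with $L = L^T$. The only thing to be careful about is to invoke symmetry in the correct direction — namely that $\mathbf{1}$ being a right null vector of $L$ makes it a left null vector precisely when $L = L^T$ — and to remark that this is exactly the property that will fail in the merely irreducible (non-symmetric) case studied in the next subsection, where the consensus value is generally not the initial average.
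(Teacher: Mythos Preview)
Your proof is correct and essentially the same as the paper's: the paper differentiates $\bar{s}(t)$ coordinate-wise and uses $a_{ij}=a_{ji}$ to conclude $\sum_{i,j}a_{ij}(s_j-s_i)=0$, which is exactly your ``sanity check'' computation; your vector version $\mathbf{1}^T L = (L\mathbf{1})^T = \mathbf{0}^T$ is just the same observation in matrix notation.
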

\begin{proof}
  Using the symmetry $a_{ij}=a_{ji}$, we find:
  \begin{align*} \bar{s}\:'(t)\:=\frac{1}{N}\sum\limits_{i=1}^{N}s'_{i}(t)=\frac{1}{N}\sum\limits_{i,j}a_{ij}(s_{j}(t)-s_{i}(t))=0.
  \end{align*}

\end{proof}
We can now show that the opinion the agents convene on is the average initial opinion and that the convergence is exponential with rate at least $\lambda_{2}$, the second smallest eigenvalue of $L$, also known as the \textit{Fieldler number} or \textit{algebraic connectivity} of the graph $G$.

\begin{remark}
  \label{rem:balanced}
We will point out that we can impose a weaker condition than symmetry on $L$ and still maintain the conservation of the average opinion (and therefore convergence to a consensus that is the average opinion).  Recall that if $L$ is not symmetric then the graph associated to $L$ is a directed graph.  If we assume that the graph is \textit{balanced}, i.e. that $L$ satisfies:
\begin{align}\label{eq:balanced}
  \sum_{j=1,\,j\neq i}^N a_{ij} = \sum_{j=1,\,j\neq i}^Na_{ji} \qquad \text{for all } i,
\end{align}
then ${\bf 1}$ must be a left eigenvector of $L$ associated to zero as well (equivalently a right eigenvector of $L^{T}$).  In this case the average opinion is conserved as:
\begin{align}
\bar{s}(t)\:' = \langle -Ls, {\bf 1}\rangle = \langle{s, -L^{T}{\bf 1}}\rangle = 0,
\end{align}
using \eqref{eq:balanced}.
\end{remark}
However, in the following corollary we will maintain the assumption that $L$ is symmetric in order to guarantee that it can be diagonalized.

\begin{corollary}
  \label{cor:symmetric_consensus}
  Suppose $L$ is irreducible and symmetric. Then the solution of the consensus model, ${\bf s}(t)$, satisfies:
  \begin{align*}
    \vect{s}(t)\rightarrow \bar{s}(0)\:{\bf 1} \quad\text{as}\quad t\rightarrow +\infty.
  \end{align*}
  with ${\bar s}$ the average opinion \eqref{eq:bar_s}. Moreover, $|s_{i}(t)-\bar{s}(0)|\leq Ce^{-\lambda_{2}t}$ where C depends only on the initial condition and $\lambda_{2}$ is the second largest eigenvalue of $L$.
\end{corollary}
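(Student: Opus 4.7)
The plan is to diagonalize $L$ explicitly and then expand the solution in its orthonormal eigenbasis, using the preservation of the average (Lemma \ref{lem:symmetric_average}) to identify the limit and using the spectral gap $\lambda_2$ to bound the remainder.

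First I would note that, because $L$ is symmetric, the spectral theorem gives an orthonormal basis $\{\vect{v}_1,\ldots,\vect{v}_N\}$ of $\R^N$ with $L\vect{v}_i=\lambda_i\vect{v}_i$, where by Proposition \ref{ppo:gershgorin} and Lemma \ref{lem:simple} the eigenvalues are real and ordered $0=\lambda_1<\lambda_2\leq\cdots\leq\lambda_N$, and the eigenvector for $\lambda_1=0$ can be taken to be $\vect{v}_1={\bf 1}/\sqrt{N}$. Expanding the initial data as $\vect{s}(0)=\sum_{i=1}^N c_i\vect{v}_i$ with $c_i=\langle \vect{s}(0),\vect{v}_i\rangle$, I would observe that
\begin{equation*}
c_1\vect{v}_1 = \bigl\langle \vect{s}(0),{\bf 1}/\sqrt{N}\bigr\rangle\,{\bf 1}/\sqrt{N} = \bar{s}(0)\,{\bf 1}.
\end{equation*}
Since the matrix exponential acts diagonally in this basis, the exact solution reads
\begin{equation*}
\vect{s}(t)=e^{-Lt}\vect{s}(0)=\bar{s}(0)\,{\bf 1}+\sum_{i=2}^N c_i\,e^{-\lambda_i t}\vect{v}_i.
\end{equation*}

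Next I would use this representation to extract both conclusions. Because $\lambda_i\geq\lambda_2>0$ for $i\geq 2$, the orthonormality of the $\vect{v}_i$ yields
\begin{equation*}
\bigl\|\vect{s}(t)-\bar{s}(0)\,{\bf 1}\bigr\|_2^2=\sum_{i=2}^N c_i^2\,e^{-2\lambda_i t}\leq e^{-2\lambda_2 t}\sum_{i=2}^N c_i^2 \leq e^{-2\lambda_2 t}\,\bigl\|\vect{s}(0)-\bar{s}(0)\,{\bf 1}\bigr\|_2^2.
\end{equation*}
This simultaneously gives convergence $\vect{s}(t)\to\bar{s}(0)\,{\bf 1}$ and the exponential rate. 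The coordinate-wise bound then follows from $|s_i(t)-\bar{s}(0)|\leq\|\vect{s}(t)-\bar{s}(0)\,{\bf 1}\|_2\leq Ce^{-\lambda_2 t}$ with $C=\|\vect{s}(0)-\bar{s}(0)\,{\bf 1}\|_2$.

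There is essentially no hard step, since the prior lemmas have already done the work: irreducibility guarantees $\lambda_2>0$ (the zero eigenvalue being simple), symmetry gives the orthonormal basis and realness of the spectrum, and Lemma \ref{lem:symmetric_average} identifies the limit component along ${\bf 1}$ as $\bar{s}(0)$. The only point that requires a moment of care is confirming that the constant $C$ depends only on the initial condition (not on $t$ or the spectrum beyond $\lambda_2$), which is transparent from the bound above. One could alternatively phrase the rate via the Rayleigh quotient by noting that $\vect{w}(t):=\vect{s}(t)-\bar{s}(0)\,{\bf 1}$ is orthogonal to ${\bf 1}$ for all $t$ (by Lemma \ref{lem:symmetric_average}), so $\tfrac{d}{dt}\|\vect{w}\|_2^2=-2\langle L\vect{w},\vect{w}\rangle\leq -2\lambda_2\|\vect{w}\|_2^2$ by the min-max characterization of $\lambda_2$, and Grönwall closes the argument; this alternative avoids the explicit diagonalization but arrives at the same estimate.
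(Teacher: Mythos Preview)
Your proof is correct and follows essentially the same approach as the paper: both diagonalize $L$ via the spectral theorem for symmetric matrices, identify the component along ${\bf 1}$ as $\bar{s}(0)\,{\bf 1}$, and use the spectral gap $\lambda_2>0$ (from Lemma~\ref{lem:simple}) to get exponential decay of the remaining components. Your presentation is a bit more streamlined---you expand directly in the orthonormal eigenbasis rather than introducing the change of variables $\mathbf{y}(t)=P^{-1}(\mathbf{s}(t)-\bar{s}(0)\,{\bf 1})$ as the paper does---but the substance is identical.
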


\begin{proof}
  The consensus model is a linear system, therefore its solution is given by:
  \begin{align*}
    \mathbf{s}(t) = e^{-tL}\mathbf{s_{0}}
  \end{align*}
  where $\mathbf{s_{0}} = (s_{1}(0),...,s_{N}(0))^{T}$. Note that since ${\bf 1}$ is an eigenvector of $L$ corresponding to the eigenvalue $0$, it is also an eigenvector of $ e^{-tL}$ corresponding to the eigenvalue of $1$.  Therefore we may write:
  \begin{equation}\label{eq:eigenrewrite}
    \mathbf{s}(t) - \bar{s}(0){\bf 1} = e^{-tL}(\mathbf{s_{0}}-\bar{s}(0)\:{\bf 1}).
  \end{equation}
  Also note that since $L$ is diagonal dominant and symmetric that there exists $P$ such that
  \begin{equation}\label{eq:decomp}
    L = PDP^{-1}
  \end{equation}
  where $D = \text{diag}(0, \lambda_{2}, \lambda_{3},...)$ and $P$ is the matrix composed of the eigenvectors of $L$.  Since $L$ is symmetric these eigenvalues are real.
  By Proposition \ref{ppo:gershgorin} and Lemma \ref{lem:simple} there is exactly one zero eigenvalue and $\lambda_{i}$ is strictly positive for $2\leq i\leq n$. We now define:
  \begin{align*}
    \mathbf{y}(t):=P^{-1}(\mathbf{s}(t)-\bar{s}(0)\:{\bf 1}),
  \end{align*}
  this is the difference between the opinion at time t and the average opinion represented in the diagonal coordinate system. Notice that:
  \begin{align*}
    \mathbf{y}'(t) &= \frac{d}{dt}[P^{-1}(\mathbf{s}(t) - \bar{s}(0)\:{\bf 1})] = D\mathbf{y}(t).
  \end{align*}
  Therefore, since this is an uncoupled system of linear equations we must have that:
  \begin{equation*}
    y_{i}(t) = y_{i}(0)e^{-\lambda_{i}t}.
  \end{equation*}
  So for $i\geq 2$ we must have that $y_{i}(t)\rightarrow 0$ as $t\rightarrow +\infty$ exponentially with rate at least $\lambda_{2}$.  To conclude, it remains to show that $y_1(t) = 0$.
  Using that the eigenvectors of $L$ form an orthogonal basis, the entry $y_1(t)$ is given by:
  \begin{displaymath}
    y_1(t) = \langle{\bf s}(t) - \bar{s}(0)\:{\bf 1},\, \frac{{\bf 1}}{\|{\bf 1}\|}\rangle = \bar{s}(t) - \bar{s}(0)= 0
  \end{displaymath}
  since the mean value $\bar{s}(t)$ is preserved over time.
\end{proof}

\subsection{Weakly connected and disconnected networks (arbitrary $L$)}

In this section we remove the assumption that $L$ is irreducible.  In terms of the network associated to $L$, this translates to assuming that the network on which the agents interact is merely \textit{weakly connected} or disconnected.  The case of weakly connected or disconnected graphs is more delicate. In this case we are implicitly assuming that individuals, or communities of individuals, may be isolated from influence from the network.  
Under these assumptions on $L$ a consensus might not always be reached. See for example the right plot in Figure \ref{fig:evolution_2_cases}.
However we will show that the dynamics always converge and provide a condition on $L$ weaker then irreducibility that is equivalent to unconditional convergence to a consensus.  Our main tool for studying the dynamics in this case is Lemma \ref{lem:ode} and the decomposition of $L$ according to its strongly connected components (similar to the so-called \textit{Frobenius normal form}).  


Given a weakly connected or disconnected graph we may partition its set of vertices into \textit{strongly connected components}.  Two vertices $i$ and $j$ are in the same strongly connected component if there exists a directed path from $i$ to $j$ and vice versa (i.e. $i \sim j$ and $j \sim i$).  If we treat each strongly connected component as a vertex of a new graph $G'$, and notice that this graph is necessarily directed acyclic, then by relabeling the vertices of $G$ in a way that agrees with the topological ordering of the vertices in $G'$ we can represent $L$ in the following form (see Appendix \ref{sec:frobenius} and Figure \ref{fig:relabeling}).
\begin{equation}
  \label{eq:frobenius_normal_form}
  L=
  \begin{bmatrix}
    B_{1}                                    \\
    &  \ddots             &   & \text{\huge0}\\
    &               & B_{k}                \\
    & \text{\huge*} &   & \ddots            \\
    &               &   &   & B_{d}
  \end{bmatrix}.
\end{equation}
 Each block, $B_{k}$, on the diagonal is irreducible as they correspond to strongly connected components of $G$. Notice that $d$ denotes the number of strongly connected components, thus when $L$ is irreducible we have $d=1$. We give an example of such a relabeling in Figure \ref{fig:relabeling}.

 \begin{figure}[ht]
  \centering
  \includegraphics[scale=.4]{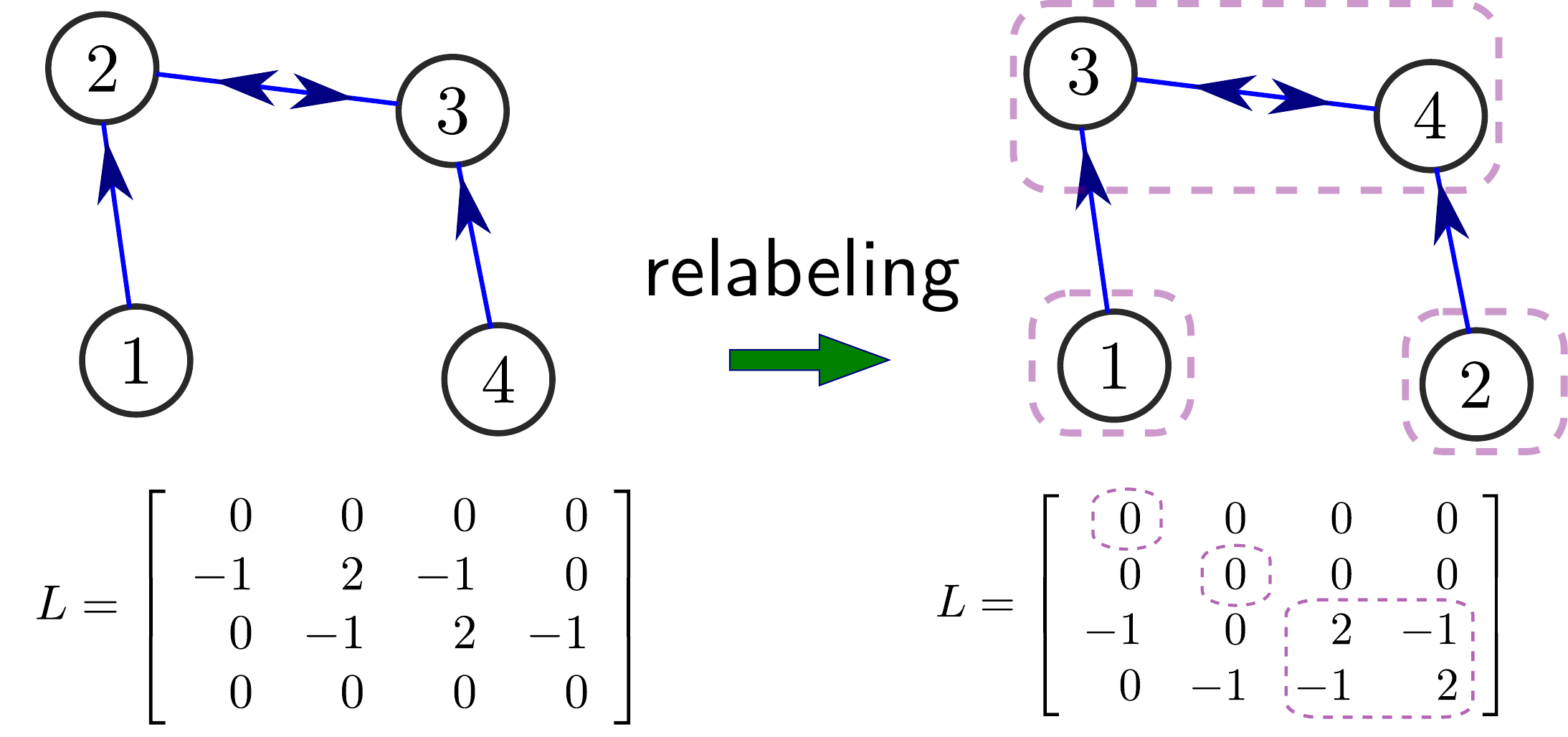}
  \caption{An example of relabeling to obtain the Frobenius normal form.}
  \label{fig:relabeling}
\end{figure}



Notice that by the definition of $L$ each block $B_{k}$ is diagonal dominant:
\begin{equation}
  \label{eq:diag_dominant}
  b_{ii} \geq \sum_{j,j\neq i} -b_{ij},
\end{equation}
where $b_{ij}$ are the coefficients of the matrix $B_k$. We can further distinguish between blocks.

\begin{definition}
  Assume $L$ is given in the Frobenius normal form \eqref{eq:frobenius_normal_form}. A block $B = (b_{ij})$  is called {\it isolated} if it satisfies:
  \begin{equation}
    \label{eq:isolated}
    b_{ii} = \sum_{j,j\neq i} -b_{ij} \qquad \text{for all } i.
  \end{equation}
  In other words, vertices from the block $B_k$ are only influenced by vertices in block $B_k$:
  \begin{equation}
    \label{eq:isolated_2}
    \hspace{-2cm}    \text{\bf isolated}: \hspace{1.2cm} \text{if } i \in B_k \text{ and } j \notin B_k , \text{ then } j \not \sim i.
  \end{equation}
  Note however that $i$ might influence $j$ (i.e. $i\sim j$).  A non-isolated block $B_k$ is called {\it absorbing} if the vertices from block $B_k$ can only influence vertices in $B_k$:
  \begin{equation}
    \label{eq:absorbing}
    \hspace{-2cm}\text{\bf absorbing}: \hspace{1.2cm}     \text{if } i \in B_k \text{ and } j \notin B_k , \text{ then } i \not \sim j.
  \end{equation}
  Note again that $j$ might influence $i$ (i.e. $j\sim i$).
\end{definition}

Notice that a block can be neither absorbing nor isolated. In the example from Figure \ref{fig:relabeling}, the matrix $L$ is composed of two isolated blocks and one absorbing block and has no blocks that are neither.  Isolated blocks of $L$ correspond to \textit{isolated strongly connected components} of the graph $G$ on which the agents interact. We now establish a correspondence between isolated blocks and zero eigenvectors of $L$.

\begin{proposition}
  \label{ppo:zero}
  If $L$ has $m$ isolated blocks there exist at least $m$ linearly independent eigenvectors of $L$ associated to the zero eigenvalue.
\end{proposition}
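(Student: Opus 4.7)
The plan is to leverage the Frobenius normal form \eqref{eq:frobenius_normal_form} to construct, for each of the $m$ isolated blocks, an explicit null vector of $L$; these vectors will be manifestly linearly independent. The first observation is that if $B_k$ is isolated then the definition \eqref{eq:isolated}, combined with the fact that every full row of $L$ sums to zero (by \eqref{eq:sigma_deter}), forces all off-block entries of $L$ in the rows indexed by $B_k$ to vanish; in particular $B_k\mathbf{1}=0$ on the block, and equivalently the isolated blocks are exactly the source components of the condensation graph $G'$ (no incoming inter-block coupling).

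The main auxiliary step I would prove is that every non-isolated diagonal block $B_T$ is invertible. Because $T$ is a strongly connected component $B_T$ is irreducible; because $T$ is non-isolated, there is at least one $i\in T$ with $\sum_{j\notin T}a_{ij}>0$, i.e.\ strict diagonal dominance in row $i$ of $B_T$. A maximum-principle argument then closes it: if $B_T u=0$ and $i_0$ attains $M=\max_i u_i>0$, then expanding row $i_0$ yields
\[
\sigma_{i_0}M\;=\;\sum_{j\in T,\,j\neq i_0} a_{i_0 j}\,u_j\;\le\;M\sum_{j\in T,\,j\neq i_0} a_{i_0 j}\;=\;M\Bigl(\sigma_{i_0}-\sum_{j\notin T}a_{i_0 j}\Bigr),
\]
so the chain of inequalities must be saturated, $u_j=M$ for every in-block neighbour of $i_0$, and by irreducibility of $B_T$ this propagates to $u\equiv M$ on $T$. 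Substituting back into any row of $B_T u$ with strict diagonal dominance produces $(B_T u)_i=M\sum_{j\notin T}a_{ij}>0$, contradicting $B_T u=0$; applying the same argument to $-u$ forces $u\equiv 0$, so $B_T$ is invertible.

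With these ingredients in hand, for each isolated block $B_{k_i}$ ($i=1,\dots,m$) I would construct $v_i\in\R^N$ block by block in Frobenius order: set $v_i^{(k_i)}=\mathbf{1}$ on block $k_i$ and $v_i^{(k_j)}=\mathbf{0}$ on every other isolated block, then for each non-isolated block $T$ set
\[
v_i^{(T)}\;=\;-B_T^{-1}\sum_{l<T} C_{T,l}\,v_i^{(l)},
\]
where $C_{T,l}$ denotes the $(T,l)$ off-diagonal block of $L$ and the right-hand side only depends on values already assigned. Verifying $Lv_i=0$ block by block is then routine: on rows indexed by an isolated block $k_j$ there is no off-diagonal coupling, so the equation reduces to $B_{k_j}v_i^{(k_j)}=0$, which holds because $v_i^{(k_j)}\in\{\mathbf{0},\mathbf{1}\}\subset\ker B_{k_j}$; on rows indexed by a non-isolated block $T$ the defining formula gives the identity directly. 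Linear independence of $v_1,\dots,v_m$ is then read off by restricting them to the coordinates of the isolated blocks, where they realise the pattern $\delta_{ij}\mathbf{1}$. The only non-routine piece of the whole argument is the invertibility of the non-isolated blocks---everything else is block-triangular bookkeeping---and that is where irreducibility of the strongly connected components is used essentially.
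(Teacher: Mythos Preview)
Your proof is correct, but it follows a genuinely different route from the paper. The paper exploits \emph{left} eigenvectors: for each isolated block $B_k$ it takes the Perron left null-vector $\bar{\mathbf u}$ of $B_k$ (which exists and is unique by Perron--Frobenius applied to the irreducible block) and pads it with zeros to obtain $\mathbf w=(0,\dots,0,\bar{\mathbf u},0,\dots,0)$. Because isolated rows have no off-block entries, $\mathbf w L=0$ automatically, so one gets $m$ left null-vectors with disjoint supports; equality of left and right null-space dimensions then yields the claim. In particular, the paper's proof of this proposition does \emph{not} need the invertibility of non-isolated blocks---that is proved separately as the next proposition.

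Your approach instead constructs the \emph{right} null-vectors explicitly by block back-substitution, which forces you to establish the invertibility of every non-isolated $B_T$ up front. Your maximum-principle argument for that invertibility is essentially the paper's proof of Proposition~\ref{ppo:no_zero_eigenvector}, so what you have effectively done is merge Propositions~\ref{ppo:zero} and~\ref{ppo:no_zero_eigenvector} into a single argument. The trade-off is clear: the paper's left-eigenvector trick is shorter and keeps the two facts logically independent, while your construction is more explicit (it actually exhibits the right null-vectors, which are the indicator of one isolated block propagated downstream through the absorbing components) and gives a more complete structural picture of $\ker L$ in one pass.
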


\begin{proof}
  Assume that $B_{k}$ is an isolated block of $L$ and that $B_{k}$ is $l\times l$. Then, the vector ${\bf 1}=(1,1,...,1)^{T}$ of length $l$ is a right eigenvector of $B_{k}$ associated to the zero eigenvalue.  Since $B_{k}$ is irreducible and is isolated we have by the Perron-Frobenius theorem that ${\bf 1}$ is the only zero right eigenvector of $B_{k}$ (see the proof of Lemma \ref{lem:simple}).  Since for any matrix the collection of left and right eigenvalues as well as the dimensions of the corresponding eigenspaces are equal, there must exist a unique \textit{left} eigenvector of $B_{k}$ associated to zero denoted $\bar{{\bf u}}$.  
  We then deduce a {\it left} eigenvector of $L$ in the form:
  \begin{displaymath}
    \vect{w}=(0,\dots,0,\,\bar{{\bf u}},0,\dots,0)
  \end{displaymath}
  where the first entry of $\bar{{\bf u}}$ is the $i$th entry of $\vect{w}$. 
  We could clearly repeat this argument for all isolated blocks to produce a collection of left eigenvectors of $L$ associated to zero which are necessarily linearly independent as they are all nonzero in disjoint coordinates.  Therefore, since the dimensions of the left and right eigenspaces of $L$ are equal, if there are $m$ isolated blocks of $L$ there must be $m$ linearly independent right eigenvectors of $L$ associated to zero.
\end{proof}

So, if $L$ has $m$ isolated blocks then it has at least $m$ linearly independent zero eigenvectors.  We will eventually show that these are the only zero eigenvectors of $L$ and deduce convergence by Lemma \ref{lem:ode}; as a first step we show that non isolated blocks of $L$ do not have a zero eigenvalue.

\begin{proposition}
  \label{ppo:no_zero_eigenvector}
  Suppose that $B_k$ is a non-isolated block. Then $B_k$ does not have a zero eigenvalue.
\end{proposition}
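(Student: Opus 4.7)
The plan is to argue by contradiction: suppose $B_k \mathbf{v} = 0$ for some nonzero $\mathbf{v}$ and deduce that the row-wise diagonal dominance of $B_k$ must actually be equality in every row, which is exactly the definition of being isolated. The key observation is that when we restrict $L$ to the indices of block $B_k$, the diagonal entries are still the full $\sigma_i = \sum_{j\neq i} a_{ij}$ (summed over the whole graph), while the off-diagonal absolute row sum is only $r_i := \sum_{j\in B_k,\, j\neq i} a_{ij}$. Thus $\sigma_i \geq r_i$, with equality iff vertex $i$ is not influenced by anyone outside $B_k$; the block is isolated precisely when equality holds for every $i\in B_k$, and non-isolated means strict inequality for at least one index $i_0$.

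Concretely, I would let $V := \max_{i\in B_k} |v_i| > 0$ and $S := \{i\in B_k : |v_i| = V\}$, then for $i\in S$ write the $i$th component of $B_k\mathbf{v}=0$ as $\sigma_i v_i = \sum_{j\in B_k,\, j\neq i} a_{ij} v_j$ and apply the triangle inequality:
\begin{equation*}
\sigma_i V \;=\; \Bigl|\sum_{j} a_{ij} v_j\Bigr| \;\leq\; \sum_{j} a_{ij}|v_j| \;\leq\; r_i V.
\end{equation*}
Combined with $\sigma_i \geq r_i$ this forces $\sigma_i = r_i$ for every $i\in S$, and the second inequality becoming equality forces $|v_j| = V$ whenever $a_{ij} > 0$ within the block. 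In graph terms, $S$ is closed under following edges inside $B_k$ to out-neighbors.

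The next step is to invoke the irreducibility of $B_k$ (it is a strongly connected component) to promote $S$ to all of $B_k$. Since $S$ is nonempty and closed under taking in-block out-neighbors, and since from any vertex in $S$ we can reach every other vertex of $B_k$ by a directed path inside the block, each such path stays entirely in $S$, whence $S = B_k$. Consequently $\sigma_i = r_i$ for every $i\in B_k$, which is precisely the definition of $B_k$ being isolated --- contradicting the hypothesis.

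I expect the cleanest point of the argument to be writing the triangle inequality and reading off both consequences at once; the only real subtlety is the spreading step, where one has to be careful that the closure of $S$ under out-neighbors inside $B_k$, together with strong connectivity of $B_k$, genuinely forces $S = B_k$ (since $B_k\setminus S$ would otherwise be an unreachable set from $S$, contradicting irreducibility). This is essentially a bare-hands version of the Taussky strengthening of the Gershgorin theorem, specialized to the structure of our Laplacian-type block, and avoids any further appeal to Perron--Frobenius beyond what was already used in Lemma \ref{lem:simple}.
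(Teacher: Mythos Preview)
Your argument is correct and is essentially the same maximum-principle-plus-irreducibility argument the paper gives: both pick an extremal coordinate of a putative null vector, use diagonal dominance to force equality at that index and to propagate the extremal value to its in-block neighbors, and then invoke irreducibility of $B_k$ to spread this to the whole block. The only cosmetic difference is that you track $|v_i|$ and conclude $\sigma_i = r_i$ for every $i$ (hence ``isolated''), whereas the paper tracks the maximal real entry and concludes the null vector must be a multiple of ${\bf 1}$, which it has already observed cannot be a null vector of a non-isolated block.
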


\begin{proof}
  Denote by $b_{ij}$ the components of $B_k$. Since $B_k$ is a non-isolated block, there exists $i_0$ such that:
  \begin{displaymath}
    b_{i_0 i_0} > \sum_{j,j\neq i_0} -b_{i_0\,j}.
  \end{displaymath}
  In particular, the constant vector ${\bf 1}=(1,\dots,1)$ cannot be a zero-eigenvector of $B_k$.  Suppose for the sake of contradiction that $B_{k}$ had a zero eigenvector, $\vect{u}$, and let $u_{i}$ be the largest entry of $\vect{u}$ i.e. $u_i=|{\bf u}|_\infty$ (we assume without loss of generality that $u_i>0$).  Then we have:
  \begin{eqnarray*}
    b_{ii}u_{i}+\sum_{j\neq i}b_{ij}u_{j}\; =\; 0 \quad &\Rightarrow& \quad     (-\sum_{j\neq i}b_{ij})u_{i}+\sum_{j\neq i}b_{ij}u_{j} \;\leq\; 0    \\
    &\Rightarrow& \quad \sum_{j\neq i}b_{ij}(u_{j}-u_i) \;\leq\; 0.
  \end{eqnarray*}
  Since $b_{ij}\leq 0 $ (for $j\neq i$) and $u_j-u_i\leq 0$,  we must have $u_j=u_i$ if $b_{ij}<0$. In other words, the coefficients of the eigenvector ${\bf u}$ are constant on the indices connected to $i$:
  \begin{displaymath}
    u_j=|{\bf u}|_\infty \quad \text{if} \quad i\sim j.
  \end{displaymath}
  Iterating the argument, we deduce that $u_j=|{\bf u}|_\infty$ if there exists a path joining $i$ to $j$. Since $B_k$ is irreducible we deduce that $u_j=|{\bf u}|_\infty$ for all $j$. Therefore, ${\bf u}=(|{\bf u}|_\infty,\dots,|{\bf u}|_\infty)$. This is a contradiction as ${\bf 1}$ is not a zero eigenvector of $B_k$.


\end{proof}

Combining propositions \ref{ppo:zero} and \ref{ppo:no_zero_eigenvector} leads to the following theorem.

\begin{theorem}\label{reducible_convergence}
  The consensus model \eqref{model_agent} converges (but not necessarily to a consensus).
\end{theorem}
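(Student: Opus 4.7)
The plan is to reduce the claim to Lemma \ref{lem:ode} applied with $A = -L$. That lemma demands two things: every nonzero eigenvalue of $-L$ has strictly negative real part, and the zero eigenvalue of $-L$ is semisimple (algebraic multiplicity equals geometric multiplicity). Once both conditions are verified, Lemma \ref{lem:ode} yields convergence of $\vect{s}(t) = e^{-tL}\vect{s}_0$ to a vector in the center subspace of $-L$, which is exactly what the theorem asserts.

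The first condition is immediate from Proposition \ref{ppo:gershgorin}: every eigenvalue $\lambda$ of $L$ satisfies $\mathrm{Re}(\lambda) \geq 0$, and the only eigenvalue lying on the imaginary axis is $0$ itself. Passing to $-L$ flips signs, so each nonzero eigenvalue of $-L$ has strictly negative real part.

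For the second condition, I would exploit the block triangular form \eqref{eq:frobenius_normal_form}. Because $L$ is block triangular, its characteristic polynomial factors as the product of the characteristic polynomials of the diagonal blocks $B_1,\dots,B_d$, so the algebraic multiplicity of $0$ as an eigenvalue of $L$ equals the sum over $k$ of the algebraic multiplicity of $0$ as an eigenvalue of $B_k$. For a non-isolated block, Proposition \ref{ppo:no_zero_eigenvector} rules out $0$ as an eigenvalue entirely, contributing $0$ to the sum. For an isolated block, $B_k$ is irreducible and has vanishing row sums, so Lemma \ref{lem:simple} applies to $B_k$ and gives that $0$ is a simple eigenvalue of $B_k$. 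If there are $m$ isolated blocks the total algebraic multiplicity of $0$ for $L$ is therefore exactly $m$. Proposition \ref{ppo:zero} already provides $m$ linearly independent eigenvectors of $L$ for the eigenvalue $0$, so the geometric multiplicity is at least $m$; combined with the algebraic bound of $m$, the two multiplicities coincide.

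With both hypotheses verified, Lemma \ref{lem:ode} delivers the result. The one place a reader might pause is the jump from irreducibility of $B_k$ plus zero row sums to the simplicity of its zero eigenvalue; this is precisely Lemma \ref{lem:simple}, whose hypotheses are satisfied for an isolated block since isolation is exactly the condition \eqref{eq:isolated} making $B_k$ look like a consensus-model matrix in its own right. The main subtlety, and the step I would present most carefully, is the identification of the algebraic multiplicity of $0$ with $m$ via the block-triangular factorization — geometric multiplicity alone (as furnished by Proposition \ref{ppo:zero}) is not enough to apply Lemma \ref{lem:ode}, so pinning down the algebraic side through the characteristic polynomial is the load-bearing part of the argument.
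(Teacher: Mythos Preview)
Your proof is correct and follows essentially the same route as the paper: decompose $L$ into its Frobenius normal form, use Propositions \ref{ppo:zero} and \ref{ppo:no_zero_eigenvector} together with the block-triangular spectrum identity to pin down the multiplicity of $0$ as the number $m$ of isolated blocks, and then invoke Lemma \ref{lem:ode}. If anything, you are more explicit than the paper about the step that matters most --- you justify the \emph{algebraic} multiplicity of $0$ via the factorization of the characteristic polynomial and an appeal to Lemma \ref{lem:simple} on each isolated block, whereas the paper leaves that piece implicit.
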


\begin{proof}
  Using the decomposition of $L$ into strongly connected components (see eq. \eqref{eq:frobenius_normal_form} and appendix \ref{sec:frobenius}), we can write the spectrum of $L$ as:
  \begin{equation}
    \text{spec}(L) = \bigcup_{k=1}^d\text{spec}(B_{k}) \label{eq:spec}.
  \end{equation}
  where $B_{k}$ corresponds to a strongly connected component.  Only isolated blocks $B_k$ have zero eigenvalues and moreover $L$ has a zero eigenvalue of multiplicity equal to the number of isolated blocks (denoted $m$) by (\ref{eq:spec}) and Propositions \ref{ppo:no_zero_eigenvector} and \ref{ppo:zero}. Moreover, Proposition \ref{ppo:zero} states that there exist $m$ linearly independent eigenvectors of $L$ associated with the zero eigenvalue and therefore we deduce the convergence by Lemma \ref{lem:ode}.

\end{proof}

\begin{remark}

In the proof of Lemma \ref{lem:ode} (see appendix \ref{sec:cv_linear}) we find that we can bound the convergence of $\vect{s}(t)$ in terms of the spectral gap of $L$, denoted $\lambda_{2}$, some $0<\epsilon<\text{Re}(\lambda_{2})$ and some $C>0$:
\begin{align}\label{rate_of_convergence}
\norm{\vect{s}(t)} \leq Ce^{-(\text{Re}(\lambda_{2})+\epsilon)t}.
\end{align}
If $L$ is symmetric, $\lambda_{2}$ is known as the \textit{algebraic connectivity} of the graph $G$ and is a measure of how well connected $G$ is.  In this context the bound (\ref{rate_of_convergence}) can be interpreted as stating that the consensus model converges faster if the network on which agents interact is better connected.
\end{remark}

We can observe from Theorem \ref{reducible_convergence} that the number of zero eigenvectors of $L$ is in correspondence with the number of isolated blocks of $L$.  This observation supplies us with a condition on $L$ that is equivalent to unconditional convergence to a consensus.

\begin{corollary}\label{cor:unconditional_consensus}
  The consensus model converges to a consensus unconditionally if and only if $L$ has exactly one isolated block.
\end{corollary}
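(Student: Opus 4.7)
The plan is to use the spectral information about $L$ already assembled: from Propositions \ref{ppo:zero}--\ref{ppo:no_zero_eigenvector} together with the block decomposition \eqref{eq:frobenius_normal_form}, the zero eigenspace of $L$ has dimension exactly equal to the number $m$ of isolated blocks, while every other eigenvalue of $L$ has strictly positive real part. Applying Lemma \ref{lem:ode} to $A=-L$, whose zero eigenspace coincides with that of $L$ and whose non-zero eigenvalues $\lambda$ satisfy $\mathrm{Re}(\lambda)<0$, the hypotheses are met, so $\vect{s}(t)$ always converges to an element $\vect{u}$ of the center subspace of $-L$; this center subspace is precisely the $m$-dimensional zero eigenspace of $L$. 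The corollary then reduces to comparing this subspace with $\mathrm{span}(\vect{1})$.

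For the $(\Leftarrow)$ direction, assume $m=1$. Since $\vect{1}$ is always a zero eigenvector of $L$, the one-dimensional zero eigenspace equals $\mathrm{span}(\vect{1})$, so Lemma \ref{lem:ode} gives $\vect{s}(t)\to\alpha\vect{1}$ for some $\alpha\in\mathbb{R}$ depending on $\vect{s}(0)$, for every initial condition. This is exactly unconditional convergence to a consensus.

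For the $(\Rightarrow)$ direction I would argue by contraposition. If $m\geq 2$, then the zero eigenspace of $L$ has dimension at least two, hence contains a vector $\vect{w}$ linearly independent from $\vect{1}$. Choosing $\vect{s}(0)=\vect{w}$, the ODE $\dot{\vect{s}}=-L\vect{s}$ is solved by the stationary trajectory $\vect{s}(t)\equiv\vect{w}$, whose limit $\vect{w}\notin\mathrm{span}(\vect{1})$ is not a consensus. A more concrete alternative, for readers who prefer an explicit initial condition rather than an abstract eigenvector, is to set opinions equal to a constant $c_1$ on one isolated block and to a different constant $c_2\neq c_1$ on another isolated block (arbitrarily elsewhere); because an isolated block is a closed subsystem whose restricted dynamics preserve the constant state, the two blocks stay frozen at their distinct initial values for all time, so global consensus cannot occur.

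I do not foresee a real obstacle: the heavy lifting has already been done in Propositions \ref{ppo:zero} and \ref{ppo:no_zero_eigenvector} and in Lemma \ref{lem:ode}, and what remains is essentially a short linear-algebra packaging step. The only point requiring a little care is the $(\Rightarrow)$ direction, where one must avoid tacitly assuming that the zero eigenspace is spanned by $\vect{1}$ when $m\geq 2$; that is why the argument is phrased in terms of producing a second, independent zero eigenvector, or equivalently exploiting two isolated blocks to manufacture two distinct limiting values.
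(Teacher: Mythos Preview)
Your $(\Leftarrow)$ argument is essentially the paper's proof: the paper notes that $\vect{1}$ is a zero eigenvector, invokes Propositions \ref{ppo:zero} and \ref{ppo:no_zero_eigenvector} to conclude that with one isolated block the zero eigenvalue is simple, and then applies Lemma \ref{lem:ode} to deduce convergence into $\mathrm{span}(\vect{1})$.

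Where you differ is that the paper's formal proof only treats this direction; the $(\Rightarrow)$ direction is not proved in the corollary itself but relegated to an informal remark afterwards, which observes that multiple isolated blocks can converge to different opinions depending on the initial data. Your contrapositive argument---exhibiting a stationary zero eigenvector $\vect{w}\notin\mathrm{span}(\vect{1})$, or equivalently freezing two isolated blocks at distinct constants---makes this direction rigorous and is a genuine improvement in completeness over what the paper writes. Both of your variants are correct; the eigenvector version is cleaner, while the two-constants version is exactly the mechanism the paper's remark gestures at.
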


\begin{proof}
  The vector ${\bf 1}$ is always a zero eigenvector of $L$.  Since $L$ has only one isolated block, the eigenvalue $0$ must be simple by Propositions \ref{ppo:zero} and \ref{ppo:no_zero_eigenvector}. Therefore, ${\bf 1}$ is the only zero eigenvector of $L$ and therefore Lemma \ref{lem:ode} implies that the dynamics converge to a consensus as the center subspace of $L$ is spanned by ${\bf 1}$.
\end{proof}

\begin{remark}

Recall that isolated blocks of $L$ correspond to isolated blocks of the graph $G$ on which the agents interact.  We can interpret an isolated block as "leading" the rest of the network.  Given a network with only one isolated block the consensus model implies that nodes in an isolated block receive no influence from nodes outside of the isolated block and since we know by Corollary \ref{cor:unconditional_consensus} that the entire network reaches a consensus, we deduce that the consensus converged to by the whole network is the consensus converged to by agents in the isolated block. The network can fail to converge to a consensus if there are multiple isolated blocks that converge on different opinions (See the right plot in Figure \ref{fig:evolution_2_cases}) - this clearly depends on the initial opinions of the agents in the network.  Thus, if a network has multiple isolated blocks the consensus model does \textit{not} converge to a consensus unconditionally (although a consensus might still be reached if all isolated blocks happen to converge on the same opinion).
\end{remark}

\subsection{Consensus as a control problem}
We now slightly shift our perspective to consider a question inspired by control theory.  Given $N$ agents with initial opinions $\set{s_{1}(0),...,s_{N}(0)}$ and a desired opinion $s^{*}\in \Rn$ that one would like every agent to converge on, we investigate if there is a way to connect the agents in a network so that under the consensus model we have:
\begin{align*}\lim_{t\rightarrow\infty}\vect{s}(t) = s^{*}{\bf 1}.\end{align*}
We can use our observations about how isolated blocks affect the behavior of the consensus model to answer this question. For ease of notation we denote:
\begin{align*}
  s_{max} &:= s_{i}\quad \text{where}\quad |s_{i}| = \text{max}\set{|s_{1}(0)|,...,|s_{N}(0)|},\\
  s_{min} &:= s_{j}\quad \text{where}\quad |s_{j}| = \text{min}\set{|s_{1}(0)|,...,|s_{N}(0)|}.
\end{align*}
We also note that there is no way to connect the agents to achieve the desired result if $s^{*}\notin \text{Conv}(\set{s_{1}(0),...,s_{N}(0)})$ (the convex hull of initial opinions) as the consensus model forces the convex hull of $\set{s_{1}(t),...,s_{N}(t)}$ to decrease in diameter as time evolves.
\begin{proposition}
  Given $(s_{1}(0),...,s_{N}(0))$ and $s^{*}\in \text{Conv}(\set{s_{1}(0),...,s_{N}(0)})$ there exists $L$ such that:
  \begin{align*}\lim_{t\rightarrow\infty}\vect{s}(t) = s^{*}{\bf 1}.\end{align*}
\end{proposition}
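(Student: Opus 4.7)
The plan is to invoke Corollary~\ref{cor:unconditional_consensus}: unconditional convergence to a consensus is equivalent to $L$ having exactly one isolated strongly connected component. I will construct such an $L$ by hand, concentrating all the ``content'' of the dynamics into a tiny isolated block (two agents suffice in one dimension) whose internal conserved quantity equals $s^*$. The rest of the network is then attached as non-isolated singletons that are merely dragged along to the block's consensus value.

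Concretely, since opinions lie in $\mathbb{R}$ and $s^* \in \text{Conv}(\{s_1(0),\dots,s_N(0)\})$, I would pick indices $i_0, j_0$ with $s_{i_0}(0) \leq s^* \leq s_{j_0}(0)$ and write $s^* = \mu\, s_{i_0}(0) + (1-\mu)\, s_{j_0}(0)$ for some $\mu \in [0,1]$; the case where all $s_i(0)$ coincide is trivial. Define $L$ via the coefficients
\begin{displaymath}
a_{i_0 j_0} = 1-\mu, \qquad a_{j_0 i_0} = \mu, \qquad a_{k\, i_0} = 1 \text{ for all } k\notin\{i_0,j_0\},
\end{displaymath}
with all other $a_{ij}$ equal to zero. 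The strongly connected components of the associated graph are $\{i_0, j_0\}$ together with the singletons $\{k\}$ for $k \neq i_0, j_0$. The pair $\{i_0, j_0\}$ is isolated, since no vertex outside it feeds into it, while each singleton $\{k\}$ is non-isolated because $a_{k\, i_0} > 0$. So $L$ has a unique isolated block, and Corollary~\ref{cor:unconditional_consensus} gives $\mathbf{s}(t) \to \alpha\mathbf{1}$ for some $\alpha \in \mathbb{R}$.

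To identify $\alpha$, note that the restriction of the dynamics to $\{i_0, j_0\}$ is autonomous (the isolated block receives no external influence), and reduces to a $2\times 2$ system in which a direct computation shows the quantity $\mu\, s_{i_0}(t) + (1-\mu)\, s_{j_0}(t)$ is conserved (this is exactly the left-eigenvector relation $a_{j_0 i_0} s_{i_0} + a_{i_0 j_0} s_{j_0}$ specialised to our weights, normalised by $a_{i_0 j_0} + a_{j_0 i_0} = 1$). Letting $t \to \infty$ in this conserved quantity yields $\alpha = \mu\, s_{i_0}(0) + (1-\mu)\, s_{j_0}(0) = s^*$, as desired.

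The only points requiring care are the degenerate endpoint cases $\mu \in \{0,1\}$, where one of $a_{i_0 j_0}, a_{j_0 i_0}$ vanishes and the isolated block collapses to a single vertex (either $\{i_0\}$ or $\{j_0\}$ whose initial opinion already equals $s^*$); the rest of the construction is unchanged. There is no serious obstacle here, as the heavy lifting is already done by Corollary~\ref{cor:unconditional_consensus}; the key observation is simply that an \emph{asymmetric} two-agent interaction with weight ratio $(1-\mu):\mu$ produces an arbitrary convex combination as its consensus, whereas a symmetric (undirected) edge would only give the midpoint.
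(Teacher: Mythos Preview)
Your proof is correct and follows essentially the same strategy as the paper: build a two-agent isolated block whose asymmetric weights are tuned so that its internal consensus equals $s^*$, attach all remaining agents as non-isolated vertices, and invoke Corollary~\ref{cor:unconditional_consensus}. The only cosmetic differences are that the paper uses the extreme initial opinions $s_{\max}$ and $s_{\min}$ for the block and strings the remaining agents in a chain rather than connecting them all directly to one vertex; your explicit identification of the conserved quantity and your treatment of the degenerate endpoints $\mu\in\{0,1\}$ are in fact more careful than the paper's version.
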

\begin{proof}
  For ease of notation, assume
\begin{align*}
  s_{max} = s_{1}(0) \quad\text{and} \quad s_{min} = s_{2}(0).
\end{align*}
We first consider the two-agent network who's evolution is given by
\begin{align}
  \begin{pmatrix} s_{1}'(t)\\ s_{2}'(t)\\ \end{pmatrix}=\begin{pmatrix}-\beta & \beta \\ \alpha & -\alpha\\   \end{pmatrix} \begin{pmatrix}s_{max}\\s_{min}\\ \end{pmatrix}. 
\end{align}
We find that if we choose $\alpha$ and $\beta$ such that
\begin{align*}
    s^{*} = \frac{1}{\alpha+\beta}(\beta s_{max}+\alpha s_{min}),
\end{align*}
we have
\begin{align*}
    \lim_{t\rightarrow\infty}\begin{pmatrix} s_{1}(t)\\ s_{2}(t)\\ \end{pmatrix} &=\begin{pmatrix} s^{*}\\ s^{*}\\ \end{pmatrix}.
\end{align*}

So, given $N$ agents, if we connect agents 1 and 2 with the weights found above and the rest of the agents in a ``chain'', i.e. we choose an $L$ of the form:
\begin{align*}
  L = \begin{pmatrix}
    \beta & -\beta & 0 & 0 & 0 &\dots & 0 \\
    -\alpha & \alpha & 0 & 0 & 0 &\dots & 0 \\
    0 & -1 &  & 1 & 0 &\dots & 0 \\
    0 & 0 & 0 & -1 & 1 &\dots & 0 \\
    &  &  & \vdots & & & \\
    0 & 0 & 0 & 0 & 0 &\dots & 1 \\
  \end{pmatrix}
\end{align*}
then the network on which the agents interact will only have one isolated block  consisting of agent 1 and agent 2.  Our choice of weights guarantees that the opinions of agents 1 and 2 converge to $s^{*}$ and since they comprise the only isolated block, Corollary \ref{cor:unconditional_consensus} guarantees that we must have that:
\begin{align*}\lim_{t\rightarrow\infty}\vect{s}(t) = s^{*}{\bf 1}\end{align*}
as desired.
\end{proof}

\section{Opinion dynamics: stochastic approach}

\subsection{Introduction}

We now adopt another point of view to model opinion dynamics. Rather than changing their opinions gradually according to their neighbors, agents now ``jump'' their opinion to one of their neighbors. The weights $a_{ij}$ now encode the {\it probability} of agent $i$ switching its opinion to agent $j$. Mathematically, we will use a Poisson process of rate $a_{ij}$ to model this event; opinion $S_i$ ``jumps'' to opinion $S_j$ at rate $a_{ij}$.
\begin{equation}
  \label{eq:jump}
  (S_i,S_j) \stackrel{\text{rate}\; a_{ij}}{\leadsto} (S_j,S_j).
\end{equation}
The dynamics are now a continuous time Markov chain instead of a system of differential equations. We begin by formally defining the dynamics in terms of the stochastic process $S_{i}$ that describes the evolution of the opinion of agent $i$.




\begin{definition}
Consider $N$ agents with opinion $S_i(t)\in \R$, $t\geq 0$. Let  $(a_{ij})_{ij}$ be the entries of the adjacency matrix of the graph $G$ on which the agents interact. For each tuple $(i,j)$ with $i\neq j$, we associate an independent Poisson process $N^{ij}(t)$ with rate $a_{ij}$. The {\bf stochastic consensus model} (SCM) is defined as:
\begin{equation}
  \label{eq:s_i_stochastic}
  \mathrm{d} S_i = \sum_{j=1,j\neq i}^N (S_j-S_i) \mathrm{d}N^{ij}.
\end{equation}
\end{definition}
Again, for the sake of clarity of presentation we restrict opinions to being one dimensional and we note that the  stochastic consensus model can be easily extended for opinions in $\mathbb{R}^n$ and that all results discussed will still hold analogous proofs.  Notice that one can write the Poisson process as $N^{ij}(t) = N(a_{ij} t)$ where $N(t)$ is a Poisson process of rate $1$. Each time a Poisson process $N^{ij}$ increases by one unit, the opinion of agent $i$, $S_i$, ``jumps'' to  $S_j$. We illustrate these dynamics in Figure \ref{fig:jump} on a graph with only three nodes and two links.

\begin{figure}[ht!] \label{fig:jump}
    \centering
    \includegraphics[scale = .5]{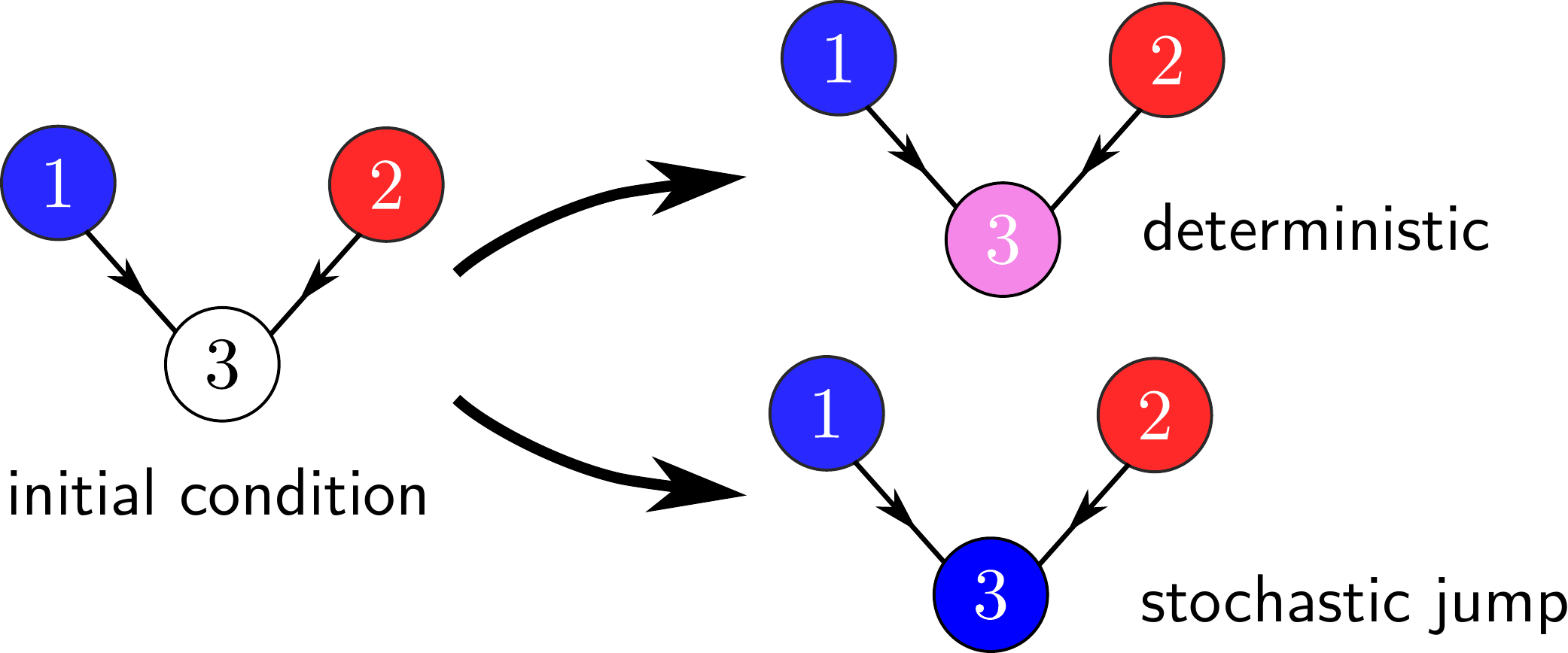}
    \caption{Illustration of the deterministic and stochastic dynamics. The node $3$ receives influence from both node $1$ and node $2$. In the deterministic dynamics \eqref{model_agent}, the opinion of $s_3$ will become a weighted average of opinion $s_1$ and $s_2$. However, in the stochastic dynamics \eqref{eq:s_i_stochastic}, the opinion $S_3$ will continue switching between opinions $S_1$ and $S_2$.}
    \label{fig:deterministic_stochastic}
\end{figure}

The stochastic consensus model can also be written in terms of its generator. For any pair $(i,j)$, denote by $\Phi_{ij}$ the function:
\begin{equation}
  \label{eq:Phi}
  \Phi_{ij}(s_1,\dots,s_i,\dots,s_j,\dots,s_N) \;\; = \;\; (s_1,\dots,s_j,\dots,s_j,\dots,s_N).
\end{equation}
Let ${\bf S}=(S_1,\dots,S_N)$. For any smooth test function $\varphi:\mathbb{R}^N\to\mathbb{R}$, we have:
\begin{equation}
  \label{eq:generator_SDE}
  \mathrm{d} \mathbb{E}[\varphi({\bf S})] = \sum_{1\leq i,j \leq N,\,j\neq i} a_{ij} \mathbb{E}[\varphi(\Phi_{ij}({\bf S})) - \varphi({\bf S})]\,\mathrm{d}t.
\end{equation}

The stochastic model can also be more simply described through its associated embedded Markov chain. Instead of describing the continuous evolution of each opinion $S_i(t)$, we only record the evolution of each jump:
\begin{displaymath}
  S_i^n = S_i(T_n)
\end{displaymath}
where $T_n$ is the time of the $n^\text{th}$ jump. Notice that $T_{n+1}-T_n$ is an exponential random variable with frequency $\sigma$ - the rate of any jump occurring:
\begin{equation}
  \label{eq:sigma}
  \sigma=\sum_{i,j,j\neq i} a_{ij},
\end{equation}
by the superposition property of independent Poisson processes.  The evolution of $S_i^n$ is then given by a discrete Markov process described by the probability transition matrix:
\begin{equation}
  \label{eq:p_ij}
  p_{ij} = P\Big( (S_i^n,S_j^n) \leadsto  (S_j^{n+1},S_j^{n+1})\Big) = \frac{a_{ij}}{\sigma},
\end{equation}
where $a_{ij}$ is the rate of agent $i$ switching to the opinion of agent $j$ and $\sigma$ given by \eqref{eq:sigma}. The generator of this Markov chain can be described as:
\begin{equation}
  \label{eq:generator_Markov}
  \mathbb{E}[\varphi({\bf S}^{n+1})] = \mathbb{E}[\varphi({\bf S}^{n})] + \sum_{i,j,j\neq i} p_{ij} \mathbb{E}[\varphi(\Phi_{ij}({\bf S}^n)) - \varphi({\bf S}^n)].
\end{equation}


We will now investigate the long-time behavior of these dynamics and analyze how the structure of the graph $G$ affects the convergence.

\begin{remark}
  Taking the expectation of the stochastic dynamics \eqref{eq:s_i_stochastic}, we recover the deterministic model \eqref{model_agent}. Indeed, denote
  \begin{equation}
    \label{eq:s_i_bar}
    \bar{s}_i = \mathbb{E}[S_i(t)],
  \end{equation}
  using that $\mathbb{E}[N^{ij}(t)]=a_{ij}\cdot t$ \cite{privault_stochastic_2013} and that $dN^{ij}(t)$ is independent of $S_i(t)$ and $S_j(t)$, we deduce:
  \begin{equation}
    \label{eq:derive_deterministic}
    \mathrm{d} \bar{s}_i = \sum_{j\neq i} (\bar{s}_j-\bar{s}_i) a_{ij} \mathrm{d}t
  \end{equation}
  which corresponds to the dynamics \eqref{model_agent}. Thus, we can deduce the behavior of the deterministic model from the stochastic model using a simple Monte-Carlo method. Conversely, the deterministic model gives information about the average behavior of the stochastic model.  We also notice that in the case that all agents are connected ($a_{ij}=1$ for all $i,j$), we recover a special case of the so-called ``choose-the-leader'' dynamics \cite{carlen_kinetic_2013,carlen_kinetic_2013-1}.
\end{remark}

\subsection{Convergence (in probability)}

Analogous to our discussion of the deterministic consensus model, we are going to prove that the stochastic model converges to a consensus unconditionally if the graph, $G$, has only one isolated strongly connected component. However, the convergence will be much slower than the deterministic model. The key tool is to use the notions of \textit{absorbing states} and \textit{absorbing Markov chains}.




\begin{definition}[Absorbing states and absorbing Markov chain]
  Let $\mathbf{S}^n$ be a Markov chain on a state space $\mathcal{C}$. A state $\mathbf{S} \in \mathcal{C}$ is called \textit{absorbing} if $p_{\mathbf{S,S}} = 1$ -- that is, the probability of staying in $\mathbf{S}$ given that the chain has arrived in $\mathbf{S}$ is 1. The Markov chain $\mathbf{S}^n$ is called an \textit{absorbing Markov chain} if for any starting state $\mathbf{S}^0 \in \mathcal{C}$, the chain can reach an absorbing state in finitely many transitions with positive probability. A Markov chain is \textit{absorbed} if it reaches an absorbing state.
\end{definition}
For instance, in our case, ${\bf 1}=(1,1,..,1)^{T}$ is an absorbing state. We can find sufficient conditions on the network $G$ for the embedded Markov chain ${\bf S}^n$ of the stochastic consensus model to be absorbing.



\begin{theorem}
  \label{thm:cv_consensus_SCM}
  Assume that the adjacency matrix $L=(a_{ij})_{ij}$ has only one isolated strongly connected component. Then the stochastic consensus model (SCM) converges with probability $1$ to a consensus.
\end{theorem}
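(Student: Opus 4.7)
The plan is to analyze the embedded Markov chain $\mathbf{S}^n$ from \eqref{eq:generator_Markov}, show that it is an absorbing Markov chain whose absorbing states are exactly the consensus configurations $\alpha\mathbf{1}$, and then apply the standard fact that a finite absorbing chain is absorbed with probability one. The first observation is that $\mathbf{S}^n$ lives on a \emph{finite} state space: jumps only copy coordinates, so every $S_i^n$ lies in $\{S_1^0,\dots,S_N^0\}$, leaving at most $N^N$ configurations. A state $\mathbf{S}$ is absorbing iff $\Phi_{ij}(\mathbf{S})=\mathbf{S}$ whenever $a_{ij}>0$, equivalently $S_i=S_j$ whenever $a_{ij}>0$. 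I will check that the unique-isolated-block hypothesis forces the graph to be weakly connected, so that transitivity along edges then pins down the absorbing states as exactly the consensus configurations.

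\medskip
\noindent The main step is to show that from any starting state, some consensus is reached in at most $N-1$ jumps with strictly positive probability. The uniqueness of the isolated block, which I denote $B_1$, is used decisively: the isolated blocks are precisely the \emph{sources} of the condensation DAG of the influence graph (``no incoming edges from other strongly connected components''), so a unique isolated block gives a unique source, and a unique source of a finite DAG is an ancestor of every vertex. Combined with the strong connectivity of $B_1$ itself, this yields that from any fixed agent $a\in B_1$ there is a directed influence-path to every agent of the network. A breadth-first ordering $a=u_1,u_2,\dots,u_N$ can then be chosen so that, for each $k\ge 2$, some $j(k)<k$ satisfies $a_{u_k,u_{j(k)}}>0$. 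Executing the jumps $(u_k,u_{j(k)})$ in that order copies the value $\alpha:=S_a^0$ to every agent, and by \eqref{eq:p_ij} this specific sequence occurs with probability $\prod_{k=2}^{N} p_{u_k,u_{j(k)}}>0$.

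\medskip
\noindent Since the state space is finite, taking the infimum over starting states yields a uniform lower bound $\epsilon_0>0$ on the probability of absorption within $N-1$ jumps. The usual geometric argument then gives that the probability of not yet being absorbed after $k(N-1)$ jumps is at most $(1-\epsilon_0)^k\to 0$, so $\mathbf{S}^n$ reaches a consensus with probability one. Since $\mathbf{S}(t)$ is piecewise constant and stays put once the embedded chain is absorbed, this upgrades to almost-sure convergence $\mathbf{S}(t)\to\alpha\mathbf{1}$ for the continuous-time process. The step I expect to be most delicate is the reachability construction: one must be careful with the direction convention ($a_{ij}>0$ means $j$ influences $i$, not the reverse) and identify isolated blocks with \emph{sources}, not sinks, of the condensation DAG in order to extract the BFS ordering.
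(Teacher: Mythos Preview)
Your proposal is correct and follows essentially the same route as the paper: reduce to the embedded chain on the finite state space $\{S_1^0,\dots,S_N^0\}^N$, use reachability from the unique isolated block to exhibit a positive-probability path to a consensus state, and invoke the standard absorption theorem for finite absorbing Markov chains. Your version is somewhat more explicit (BFS ordering, the $N{-}1$ step bound, the characterization of absorbing states as exactly the consensus configurations via weak connectivity), but the underlying argument is the same as the paper's.
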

\begin{proof}
  We are going to show  the convergence of the embedded Markov chain $\{{\bf S}^n\}_n$ from which we can easily deduce the  convergence of the continuous process $\{{\bf S}(t)\}_{t\geq 0}$.

  Notice first that for a given initial condition ${\bf S}^0$, the embedded Markov chain ${\bf S}^n$ evolves in a finite space $\mathcal{C}$ of size $N^N$. 
  Indeed, each component ${\bf S}_i^n$ has to be one of the initial opinion components $\big({\bf S}_i^0\big)_{1\leq i \leq N}$. Thus, if we can show that $\{{\bf S}^n\}_n$ is an absorbing Markov chain, then ${\bf S}^n$ will be absorbed with probability $1$ [Theorem 11.3, \cite{Grinstead}].

  Consider now any opinion $S_i^0$ in the isolated strongly connected component of the graph. By assumption on the adjacency matrix $L$, there exists for any $j$ a path joining $i$ to $j$ \eqref{eq:path}. Thus, there is a strictly positive probability (i.e. $p_{i,q_2}\dots p_{q_{k-1},j}$ where $q_2,\dots,q_{k-1}$ path joining $i$ to $j$) such that $S_j^k=S_i^k=S_i^0$ after $k$ steps. Iterating the argument for all $j$, we find a non-zero probability that $S_j^n=S_i^0$ for all $j$ with $n$ finite (with a rough upper-bound of $N^N$). Thus, we conclude that $\{{\bf S}^n\}_n$ is an absorbing Markov chain.
\end{proof}

\begin{remark}
  From Theorem \ref{thm:cv_consensus_SCM}, we can deduce a simple proof for corollary \ref{cor:unconditional_consensus}  concerning the convergence of the deterministic model. 
  Indeed, for any $\varepsilon>0$ and initial condition ${\bf S}^0$, there exists a time $T$ such that for any $t \geq T$ we have $\mathbb{P}\Big({\bf S}(t) \text{ is a consensus}\Big)\geq1-\varepsilon$.
  Taking the expectation of the stochastic process, ${\bf s}(t) = \mathbb{E}[{\bf S}(t)]$, we deduce that ${\bf s}(t)$ converges to the expected consensus. However notice that we lose the rate of convergence as we did not analyze the eigenvalues of the dynamics.
\end{remark}

\subsection{Decay rate for undirected graphs}

Theorem \ref{thm:cv_consensus_SCM} gives a necessary and sufficient conditions for the  unconditional convergence to consensus of the stochastic model. However, we do not have any information about the rate of convergence of the process. In the deterministic case, we have seen that the convergence is exponential with a explicit rate given in terms of the second largest eigenvalue of the matrix $L$, $\lambda_2$. We would like to investigate if one could obtain a similar explicit convergence rate for the stochastic model. 
With this aim, we are going to investigate the empirical variance of the process:
\begin{equation}
  \label{eq:V_var}
  V(\mathbf{\bf S}(t)) = \frac{1}{N} \sum_{i=1}^N |S_i(t) - \bar{S}(t)|^2 = \frac{1}{2N^2}\sum_{1\leq i,j \leq N} |S_i(t) - S_j(t)|^2,
\end{equation}
where $\bar{S}(t)$ is the empirical mean:
\begin{equation}
  \label{eq:mean_S}
  \overline{S}(t) = \frac{1}{N} \sum_{i=1}^N S_i(t).
\end{equation}
We will need to assume that the interactions are symmetric, i.e. that $L$  is symmetric.

\begin{lemma}
  Suppose that the adjacency matrix $L$ is symmetric, then the empirical variance $V(\mathbf{S})$ \eqref{eq:V_var} decays according to
  \begin{equation}
    \label{eq:decay_emp_var}
    \frac{d}{dt} \mathbb{E}[V(\mathbf{S})] = -\frac{1}{N^2}\mathbb{E}[\sum_{i,j}a_{ij}|S_j-S_i|^2].
  \end{equation}
\end{lemma}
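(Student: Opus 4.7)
The plan is to apply the generator formula \eqref{eq:generator_SDE} to the test function $\varphi({\bf S}) = V({\bf S})$, then identify the antisymmetric piece that vanishes under the symmetry assumption $a_{ij}=a_{ji}$, leaving exactly the claimed dissipation term.

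First, I would compute the one-step variation $V(\Phi_{ij}({\bf S})) - V({\bf S})$ explicitly. Using the second form $V({\bf S}) = \tfrac{1}{N}\sum_k s_k^2 - \bar{s}^{\,2}$ and the fact that $\Phi_{ij}$ changes only the $i$-th coordinate (from $s_i$ to $s_j$), so that $\bar{s}$ shifts by $(s_j-s_i)/N$ and $\sum_k s_k^2$ shifts by $s_j^2 - s_i^2$, a short algebraic manipulation gives
\begin{equation*}
  V(\Phi_{ij}({\bf S})) - V({\bf S}) \;=\; \frac{(s_j-s_i)(s_j+s_i-2\bar s)}{N} \;-\; \frac{(s_j-s_i)^2}{N^2}.
\end{equation*}
(Equivalently, one can get this directly from the pairwise form $V = \tfrac{1}{2N^2}\sum_{k,l}|s_k-s_l|^2$ by isolating the terms involving the index $i$.)

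Second, I would plug this into \eqref{eq:generator_SDE}. The key observation is that the first summand $\tfrac{1}{N}(s_j-s_i)(s_j+s_i-2\bar s)$ is \emph{antisymmetric} under the swap $i\leftrightarrow j$, while the coefficients $a_{ij}$ are \emph{symmetric} by assumption. Relabeling the summation index pair shows that
\begin{equation*}
  \sum_{i\neq j} a_{ij}\,\frac{(s_j-s_i)(s_j+s_i-2\bar s)}{N} \;=\; 0.
\end{equation*}
The second summand $-(s_j-s_i)^2/N^2$ is symmetric, and summing it against $a_{ij}$ (noting that the diagonal terms $i=j$ contribute zero, so we may freely write the sum over all $i,j$) produces exactly $-\tfrac{1}{N^2}\sum_{i,j}a_{ij}|S_j-S_i|^2$. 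Taking expectations yields \eqref{eq:decay_emp_var}.

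There is no real obstacle here beyond careful bookkeeping of the quadratic expansion; the whole argument hinges on the symmetry/antisymmetry split. The only place where symmetry of $L$ is used is to kill the $(s_j+s_i-2\bar s)$ cross term, so the step to watch is exactly that symmetrization, which is also what would fail (and produce a nonzero drift in $\mathbb{E}[V]$) if one dropped the assumption $a_{ij}=a_{ji}$.
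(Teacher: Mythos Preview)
Your argument is correct and follows essentially the same route as the paper: apply the generator \eqref{eq:generator_SDE} to $V$, compute the one-jump increment, and use the symmetry $a_{ij}=a_{ji}$ to eliminate the cross term, leaving only the $-\tfrac{1}{N^2}|S_j-S_i|^2$ dissipation. The only cosmetic difference is that the paper writes the increment via the pairwise form of $V$ (obtaining $\tfrac{1}{N^2}\sum_k(|S_k-S_j|^2-|S_k-S_i|^2)$ for the piece that cancels), whereas you use the mean form $V=\tfrac{1}{N}\sum_k s_k^2-\bar s^{\,2}$ and spot the antisymmetric factor $(s_j-s_i)(s_j+s_i-2\bar s)$ directly; these two expressions are algebraically identical, so the proofs coincide.
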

\begin{proof}
  Using the generator of the stochastic process \eqref{eq:generator_SDE}, we find:
  \begin{align*}
    \frac{d}{dt}\mathbb{E}[V(\mathbf{S})] = \sum_i\sum_{j\neq i}a_{ij}\mathbb{E}[V(\Phi_{ij}(\mathbf{S})) - V(\mathbf{S})].
  \end{align*}
  Moreover, notice the following equality:
  \begin{align*}
    V(\Phi_{ij}(s)) = V(s) + \frac{2}{2N^2}\sum_{k=1}^N \Big(|S_k-S_j|^2-|S_k-S_i|^2\Big) - \frac{2}{2N^2} |S_j-S_i|^2.
  \end{align*}
  Indeed, when $i$ jumps to opinion $j$ (i.e. $\Phi_{ij}({\bf S})$), all the '$(i,k)$' index become  '$(j,k)$' but the difference between opinion $i$ and $j$ is zero.  We then deduce:
  \begin{align*}
    V(\Phi_{ij}(\mathbf{S})) + V(\Phi_{ji}(\mathbf{S})) = 2V(\mathbf{S}) - \frac{4}{2N^2}|S_j-S_i|^2.
  \end{align*}
  Furthermore, exploiting the symmetry $a_{ij} = a_{ji}$ we get that
  \begin{align*}
    a_{ij}V(\Phi_{ij}(\mathbf{S})) + a_{ji}V(\Phi_{ji}(\mathbf{S})) \;\;=\;\; (a_{ij}+a_{ji})\big[V(\mathbf{S}) - \frac{2}{2N^2}|S_j-S_i|^2\big].
  \end{align*}
  Thus
  \begin{align*}
    \frac{d}{dt}\mathbb{E}[V(\mathbf{S})] &= \sum_i\sum_{j\neq i}\mathbb{E}[a_{ij}(V(\Phi_{ij}(\mathbf{S})) - V(\mathbf{S}))]\\
                                          &=\mathbb{E}\big[\sum_{i<j} \big(a_{ij}V(\Phi_{ij}(\mathbf{S})) + a_{ji}V(\Phi_{ji}(\mathbf{S})) - (a_{ij} + a_{ji})V(\mathbf{S})\big)\big]\\
                                          &= -\frac{1}{N^2}\mathbb{E}\big[\sum_{i<j}(a_{ij}+a_{ji})|S_j-S_i|^2\big] = -\frac{1}{N^2}\mathbb{E}\big[\sum_{i,j}a_{ij}|S_j-S_i|^2\big]
  \end{align*}
\end{proof}


We can now deduce a lower bound for the previous equality \eqref{eq:decay_emp_var} in terms of $\lambda_2$, the second smallest eigenvalue of the Laplacian matrix and the algebraic connectivity of the graph $G$.

\begin{lemma}
  Suppose that the Laplacian matrix $L$ of the network $G$ is symmetric and let $\lambda_2$ be its second smallest eigenvalue. Then:
  \begin{equation}
    \label{eq:lower_bound_ld2}
    \sum_{1\leq i,j \leq N,\,j\neq i} a_{ij}|s_j-s_i|^2 \geq 2\lambda_2N\cdot V(\mathbf{S}).
  \end{equation}
  with $V(\mathbf{S})$ the variance \eqref{eq:V_var}.
\end{lemma}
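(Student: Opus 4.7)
The plan is to recognize the left-hand side as (twice) the Dirichlet form associated with the Laplacian $L$ and then apply the Rayleigh--Ritz (Courant--Fischer) characterization of the second smallest eigenvalue $\lambda_2$.

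First I would expand the quadratic form $\langle \mathbf{s}, L\mathbf{s}\rangle$ using the explicit entries of $L$. Since $L_{ii}=\sigma_i=\sum_{j\neq i} a_{ij}$ and $L_{ij}=-a_{ij}$ for $i\neq j$, a direct computation combined with the symmetry $a_{ij}=a_{ji}$ gives
\begin{equation*}
\langle \mathbf{s}, L\mathbf{s}\rangle \;=\; \frac{1}{2} \sum_{1\leq i,j\leq N,\,j\neq i} a_{ij}\,|s_j-s_i|^2,
\end{equation*}
so proving \eqref{eq:lower_bound_ld2} reduces to showing $\langle \mathbf{s}, L\mathbf{s}\rangle \geq \lambda_2 N\cdot V(\mathbf{S})$.

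Next I would peel off the kernel direction. Writing $\mathbf{s}=\bar{S}\,\mathbf{1}+\mathbf{w}$ with $\mathbf{w}:=\mathbf{s}-\bar{S}\,\mathbf{1}$, we have by construction $\langle \mathbf{w},\mathbf{1}\rangle=0$. Because $L\mathbf{1}=0$ (Proposition \ref{ppo:gershgorin}) and $L$ is symmetric,
\begin{equation*}
\langle \mathbf{s}, L\mathbf{s}\rangle \;=\; \langle \bar{S}\,\mathbf{1}+\mathbf{w},\,L\mathbf{w}\rangle \;=\; \bar{S}\,\langle L\mathbf{1},\mathbf{w}\rangle + \langle \mathbf{w}, L\mathbf{w}\rangle \;=\; \langle \mathbf{w}, L\mathbf{w}\rangle.
\end{equation*}

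Finally I would invoke the spectral theorem. Since $L$ is symmetric and positive semidefinite, it admits an orthonormal eigenbasis $\{\mathbf{v}_1,\dots,\mathbf{v}_N\}$ with eigenvalues $0=\lambda_1\leq \lambda_2\leq \dots\leq \lambda_N$ and $\mathbf{v}_1=\mathbf{1}/\|\mathbf{1}\|$. The vector $\mathbf{w}$ lies in the orthogonal complement of $\mathbf{v}_1$, so the Rayleigh--Ritz inequality yields $\langle \mathbf{w}, L\mathbf{w}\rangle \geq \lambda_2 \|\mathbf{w}\|^2$. Combining this with the identity $\|\mathbf{w}\|^2 = \sum_i |s_i-\bar{S}|^2 = N\cdot V(\mathbf{S})$ and multiplying through by $2$ gives \eqref{eq:lower_bound_ld2}.

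There is no real obstacle here; the mild subtlety is that the Rayleigh quotient bound $\langle \mathbf{w},L\mathbf{w}\rangle\geq \lambda_2\|\mathbf{w}\|^2$ genuinely requires $\mathbf{w}\perp \mathbf{1}$, which is why the decomposition around the empirical mean is essential. The inequality is sharp when $\mathbf{s}-\bar{S}\mathbf{1}$ is the Fiedler eigenvector, which is consistent with identifying $\lambda_2$ as the algebraic connectivity driving the optimal decay rate.
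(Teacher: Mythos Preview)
Your proof is correct and follows essentially the same route as the paper: identify the left-hand side as $2\langle L\mathbf{s},\mathbf{s}\rangle$, subtract off the constant vector so that the remainder is orthogonal to $\ker L$, and then use the spectral characterization of $\lambda_2$ on $\mathbf{1}^\perp$. The only cosmetic difference is that the paper carries out the last step by explicitly diagonalizing $L=PDP^T$ and working in the eigenbasis coordinates, whereas you invoke the Rayleigh--Ritz inequality directly; these are the same argument.
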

\begin{proof}
  From the above, we note that
  \begin{align*}
    \langle L\mathbf{S},\mathbf{S} \rangle &= \sum_{i=1}^N \Big(\sum_{j=1}^N a_{ij}(S_i-S_j)\Big) S_i = \sum_{ij} a_{ij}(S_i-S_j)S_i\\
                                           &= \sum_{ji} a_{ji}(S_j - S_i)S_j = \sum_{ji} a_{ij}(S_j - S_i)S_j
  \end{align*}
  where we use the symmetry of $L$. Therefore we can write
  \begin{align*}
    \langle L\mathbf{S},\mathbf{S} \rangle &= \frac{1}{2}\Big(\sum_{ij} a_{ij}(S_j - S_i)S_j + \sum_{ij} a_{ij}(S_j - S_i)S_i\Big)\\
                                           &= \frac{1}{2}\sum_{ij}a_{ij}(S_i - S_j)(S_i - S_j) = \frac{1}{2}\sum_{ij}a_{ij}|S_i-S_j|^2.
  \end{align*}
  Notice also that the empirical variance can be written in two ways:
  \begin{displaymath}
    \frac{1}{2N^2} \sum_{ij}|S_i - S_j|^2 = \frac{1}{N} \sum_{i}|S_i - \overline{S}|^2 =  \frac{1}{N} \|\mathbf{S} - \overline{S}{\bf 1}\|^2,
  \end{displaymath}
  where $\overline{S}$ is the average opinion \eqref{eq:mean_S}. Since $L {\bf 1} = 0$, we also deduce:
  \begin{align*}
    \langle L(\mathbf{S}-\overline{S}{\bf 1}),  \mathbf{S} - \overline{S}{\bf 1}\rangle &= \langle L\mathbf{S}, \mathbf{S}\rangle.
  \end{align*}
  Decomposing the symmetric matrix $L$ as $L=P D P^T$ with $D=\text{diag}(0,\lambda_2,\dots,\lambda_N)$, we deduce:
  \begin{align*}
    \langle L\mathbf{S},\mathbf{S}\rangle &= \langle L(\mathbf{S}-\overline{\mathbf{S}}),  \mathbf{S} - \overline{\mathbf{S}}\rangle\\
                                          &= \langle DP^T(\mathbf{S}-\overline{\mathbf{S}}), P^T(\mathbf{S}-\overline{\mathbf{S}})\rangle
                                          = \langle D {\bf w}, {\bf w}\rangle
  \end{align*}
  with $\mathbf{w} = P^T(\mathbf{s}-\overline{\mathbf{s}})$. Thus,
  \begin{align*}
    \langle L\mathbf{S},\mathbf{S}\rangle &= \sum_{i=1}^N \lambda_i |w_i|^2 =\sum_{i=2}^N \lambda_i |w_i|^2 \geq \lambda_2 \sum_{i=2}^N |w_i|^2.
  \end{align*}
  Notice that the first component $\mathbf{w}_1$ has to be zero as the first row of $P^{T}$ is $\alpha\bf{1}$ for some $\alpha \in \R$. 
  \begin{displaymath}
    \lambda_2\sum_{i=2}^N |w_i|^2 = \lambda_2\sum_{i=1}^N |w_i|^2 = \lambda_2\|{\bf w}\|^2.
  \end{displaymath}
  Since the matrix $P$ preserves the norm, we have:
  \begin{align*}
    \langle L\mathbf{S},\mathbf{S}\rangle &\geq \lambda_2 \|{\bf w}\|^2 = \lambda_2 \|S-\overline{S}{\bf 1}\|^2 = \lambda_2 N\cdot V({\bf S}).
  \end{align*}
  Thus
  \begin{align*}
    \sum_{ij}a_{ij}|S_j-S_i|^2 = 2\langle L\mathbf{S},\mathbf{S}\rangle \geq 2 \lambda_2 N V(\mathbf{S}).
  \end{align*}
\end{proof}

Using these tools we can now characterize the convergence rate of the stochastic consensus model on a symmetric network.
\begin{theorem}
  Assume $L$ is symmetric and connected. Denote $\lambda_2$ its second eigenvalue. Then
  \begin{align*}
    \mathbb{E}[V(\mathbf{S}(t))] \leq e^{\frac{-\lambda_2t}{N}}\mathbb{E}[V(\mathbf{S}(0))].
  \end{align*}
\end{theorem}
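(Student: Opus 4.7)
The plan is to stitch together the two preceding lemmas via a standard Grönwall step. The first lemma supplies the exact evolution
\[
\frac{d}{dt}\mathbb{E}[V(\mathbf{S})] = -\frac{1}{N^2}\mathbb{E}\Big[\sum_{i,j} a_{ij}|S_j - S_i|^2\Big],
\]
while the second supplies the pointwise spectral lower bound
\[
\sum_{i,j}a_{ij}|S_j - S_i|^2 \geq 2\lambda_2 N \cdot V(\mathbf{S}),
\]
the heavy lifting of which rested on recognizing the pairwise sum as the Laplacian quadratic form $2\langle L\mathbf{S},\mathbf{S}\rangle$ and applying the Rayleigh quotient characterization of $\lambda_2$ (valid because $L$ is symmetric, hence orthogonally diagonalizable, and because $\mathbf{1}$ spans the kernel).

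First I would take expectations of the pointwise bound (using linearity of expectation, which preserves the inequality) to obtain $\mathbb{E}[\sum_{i,j}a_{ij}|S_j-S_i|^2] \geq 2\lambda_2 N\, \mathbb{E}[V(\mathbf{S})]$. Substituting this into the evolution identity yields the scalar differential inequality
\[
\frac{d}{dt}\mathbb{E}[V(\mathbf{S}(t))] \leq -\frac{2\lambda_2}{N}\mathbb{E}[V(\mathbf{S}(t))].
\]
Next, multiplying by the integrating factor $e^{2\lambda_2 t/N}$ and integrating on $[0,t]$ (equivalently, invoking Grönwall) gives
\[
\mathbb{E}[V(\mathbf{S}(t))] \leq e^{-2\lambda_2 t / N}\,\mathbb{E}[V(\mathbf{S}(0))],
\]
which is a fortiori $\leq e^{-\lambda_2 t/N}\,\mathbb{E}[V(\mathbf{S}(0))]$, as claimed. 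Connectedness of $G$ enters only to guarantee $\lambda_2 > 0$, so that the decay is genuinely exponential rather than trivial.

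Because both auxiliary lemmas have already been established, there is essentially no obstacle: the generator calculation (which required carefully combining the $a_{ij}V(\Phi_{ij}(\mathbf{S}))$ and $a_{ji}V(\Phi_{ji}(\mathbf{S}))$ terms using symmetry of $L$) and the spectral comparison (which required noting that $P^T(\mathbf{S}-\bar{S}\mathbf{1})$ has vanishing first coordinate) are already packaged. The only care needed is to observe that the pointwise inequality from the second lemma holds $\omega$-wise, so taking expectations preserves it. Once that is noted, the remaining passage from a linear differential inequality to exponential decay is routine and completes the argument.
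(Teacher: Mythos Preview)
Your proposal is correct and follows exactly the paper's approach: combine the evolution identity from the first lemma with the spectral lower bound from the second lemma to obtain a linear differential inequality, then apply Gr\"onwall. You have in fact been a bit more careful than the paper, which silently loses a factor of $2$ (your bound $e^{-2\lambda_2 t/N}$ is the one the two lemmas actually yield, and the paper's stated rate $e^{-\lambda_2 t/N}$ is just the weaker consequence you note).
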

\begin{proof}
  From the previous lemmas we observe that
  \begin{align*}
    \frac{d}{dt}\mathbb{E}[V(\mathbf{S}(t))] = -\frac{1}{2N^2}\mathbb{E}[\sum_{ij}a_{ij}|S_j-S_i|^2] \leq -\frac{\lambda_2}{N}\mathbb{E}[V(\mathbf{S}(0))]
  \end{align*}
  Applying Gronwall's lemma gives the result.
\end{proof}

\begin{remark}
  Thus we see that the expected value of the variance of the opinions decays at rate $\frac{\lambda_2}{N}$; the dependence on the number of agents $N$ is naturally contrasted with the deterministic case where the decay rate is only $\lambda_2$, in other words, the deterministic dynamics converge much faster than the stochastic dynamics.
\end{remark}

Recall that in the deterministic case studied earlier we found that in the case of a balanced graph that the consensus converged to is the initial average of opinions.  We can make a related observation concerning the stochastic model by exploiting the fact that the average, ${\bar S}$, is a martingale. The following corollary also gives a succinct proof of the convergence of the stochastic dynamics in this case.

\begin{corollary}
	Suppose $G$ is a balanced directed graph \eqref{eq:balanced} and strongly connected. Then with probability 1, the dynamics $S(t)$ converge to a consensus opinion $S^*$ at time $T<\infty$ and the consensus satisfies $\mathbb{E}[S^*] = \frac{1}{N}\sum_1^N S(0)$.
\end{corollary}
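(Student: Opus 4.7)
The idea is to combine the finite-time absorption guaranteed by Theorem \ref{thm:cv_consensus_SCM} with a martingale argument for the empirical mean $\bar{S}(t)$, then close the loop with an optional stopping argument.

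First I would verify that the empirical mean $\bar{S}(t)=\frac{1}{N}\sum_{i=1}^N S_i(t)$ is a martingale. Applying the infinitesimal generator \eqref{eq:generator_SDE} to the linear test function $\varphi(\mathbf{S})=\bar{S}$, each flip $\Phi_{ij}$ contributes $\varphi(\Phi_{ij}(\mathbf{S}))-\varphi(\mathbf{S})=\frac{1}{N}(S_j-S_i)$, so the predictable drift of $\bar{S}(t)$ equals
\[
\frac{1}{N}\sum_{i\neq j} a_{ij}(S_j-S_i)
= \frac{1}{N}\Bigl(\sum_{j} S_j\sum_{i\neq j} a_{ij} - \sum_{i} S_i\sum_{j\neq i} a_{ij}\Bigr).
\]
Under the balanced condition \eqref{eq:balanced} the in-degree and out-degree weighted sums coincide, so this drift vanishes identically. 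Hence $\bar{S}(t)$ is a martingale with respect to the natural filtration generated by the Poisson processes $N^{ij}$.

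Next I would establish finite-time absorption. Since $G$ is strongly connected it is its own unique isolated strongly connected component, so Theorem \ref{thm:cv_consensus_SCM} applies: the embedded chain $\{\mathbf{S}^n\}_n$ reaches an absorbing state $S^*\mathbf{1}$ after finitely many transitions, say at step $N_*$, with probability one. Because the waiting times between consecutive jumps are independent exponential random variables of rate $\sigma$ given by \eqref{eq:sigma}, the continuous-time absorption time $T=T_{N_*}$ is an almost surely finite stopping time.

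To finish, I would apply the optional stopping theorem. The martingale $\bar{S}(t)$ is uniformly bounded, since every component $S_i(t)$ remains in the finite set $\{S_1(0),\dots,S_N(0)\}$ throughout the evolution; and $T$ is an a.s. finite stopping time at which $\bar{S}(T)=\frac{1}{N}\sum_{i=1}^N S^*=S^*$. Bounded optional stopping then yields
\[
\mathbb{E}[S^*]=\mathbb{E}[\bar{S}(T)]=\bar{S}(0)=\frac{1}{N}\sum_{i=1}^N S_i(0),
\]
which is the claim. The only delicate point is the justification of optional stopping at the unbounded $T$, but this is routine: stop at $t\wedge T$, use the martingale property to get $\mathbb{E}[\bar{S}(t\wedge T)]=\bar{S}(0)$, and pass to the limit via dominated convergence using the uniform bound on $\bar{S}$.
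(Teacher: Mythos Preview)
Your proposal is correct and follows essentially the same approach as the paper: show $\bar{S}(t)$ is a martingale via the generator and the balanced condition, invoke Theorem~\ref{thm:cv_consensus_SCM} for almost sure finite-time absorption, and apply optional stopping using boundedness of $\bar{S}$. If anything, your justification of the optional stopping step (via $t\wedge T$ and dominated convergence) is more careful than the paper's, which simply asserts boundedness and finiteness of the state space before invoking the theorem.
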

\begin{proof}
  Let ${\bar S}(t) = \frac{1}{N} \sum_1^N S_i(t)$ the average. Using eq. \eqref{eq:generator_SDE}, we deduce:
  \begin{displaymath}
    \text{d} \mathbb{E}[{\bar S}(t)] = \sum_{i,j,j\neq i} a_{ij}\mathbb{E}[\Phi_{ij}({\bar S}(t))-{\bar S}(t)] = \sum_{i,j,j\neq i} \frac{a_{ij}}{N}\mathbb{E}[S_j(t) - S_i(t)] = 0
  \end{displaymath}
  since the graph is balanced (see remark \ref{rem:balanced}). Therefore ${\bar S}(t)$ is a martingale. Moreover, ${\bar S}(t)$ is bounded with at most $N^2$ states (finite) and therefore there exists a stopping time $T$ that is finite with probability $1$: ${\bar S}(t) \to {\bar S}^*$ almost surely.  Then, by the Optional Stopping Theorem,
  \begin{align*}
    \mathbb{E}[S^*] = \mathbb{E}[{\bar S}(T)] = \mathbb{E}[{\bar S}(0)] = {\bar S}(0)
  \end{align*}
  Since the graph $G$ is also strongly connected, we know by Theorem \ref{thm:cv_consensus_SCM} that $S^*$ must be a consensus.

\end{proof}

\section{Numerical simulations}

We will now present some numerical experiments to further analyze more subtle features of both of the consensus models studied.  We first examine explicitly how the \textit{algebraic connectivity} of the interaction network affects the speed of convergence in the case of a symmetric $L$ - recall that in both the deterministic and stochastic case that we were able to bound the speed of convergence in terms of the algebraic connectivity.  Then, we focus on the case where there are multiple isolated blocks as we have shown in this case that a consensus does \textit{not} form unconditionally.  We are interested in how the opinions of isolated blocks affect the distribution of opinions in absorbing components.  To this end we will examine the case of a ``battle'' between two isolated blocks with opinions at the extreme ends of the possible spectrum of opinions.  Each of these isolated blocks will have the same number of connections to a central absorbing component whose internal connections are all symmetric.

\subsection{Algebraic connectivity and speed of convergence}

  Earlier in the discussion of the deterministic model we showed that we could characterize the convergence rate of the model in terms of $\lambda_{2}$, the eigenvalue of $L$ with the second smallest real part.  In the case that $L$ was symmetric (an undirected network) we could say even more - that the rate of convergence of the dynamics was 
  at least of of order $\mathcal{O}(\mathrm{e}^{-\lambda_{2} t})$. The second eigenvalue $\lambda_{2}$ is known as the \textbf{algebraic connectivity} of the graph $G$, it measures in a sense {\it how well the graph is connected}.  
  In this interpretation, since $\lambda_{2}$ bounds the speed of convergence, the dynamics converge faster if the graph is {\it better connected}.  We illustrate with several examples. We give two examples of graphs with the same number of vertices ($|V|=20$) but different adjacency matrices. In Fig.~\ref{fig:graph1}--left, we present a graph where each vertex is connected with its $4$ neighbors. In Fig.~\ref{fig:graph1}-right, the graph is composed of two sub-graphs connected by only one link resulting in a {\it weak} connectivity. Indeed, computing the algebraic connectivity $\lambda_{2}$, we observe that the graph on the left has a higher value even though it has fewer edges.

To illustrate how this affects the speed of convergence, we perform two simulations of the dynamics starting from the same initial configuration but using the two graphs presented in figure \ref{fig:graph1}.  The results are given in Fig.~\ref{fig:opinion_over_time}. We observe that the right figure converges quickly to two ``clusters'' but then these clusters are very slow to move together. This is due to the {\it weak} connectivity of the graph. In Fig.~\ref{fig:variance_over_time}, we estimate the variance (denoted ${\bf V}(t)$ in the stochastic dynamics) of the opinions over time :
\begin{equation}
  \label{eq:var}
  {\bf v}(t) = \frac{1}{2N^2} \sum_{i,j} |s_j(t)-s_i(t)|^2.
\end{equation}
The variances are decaying to zero (indicating that a consensus is forming). As expected, the variance is decaying faster for the graph with the larger $\lambda_{2}$ (left-figure).  We also include the evolution of the stochastic dynamics on the graph in Fig.~\ref{fig:graph1}--right,  from the same initial condition, in Fig.~\ref{fig:variance_over_time}--left.  Notice that in Fig.~\ref{fig:variance_over_time}--right that the variance of the stochastic dynamics is the slowest to decay - this is an illustration of how the stochastic dynamics slow down the speed of convergence.

\begin{figure}[p]
  \centering
  \includegraphics[scale=1.2]{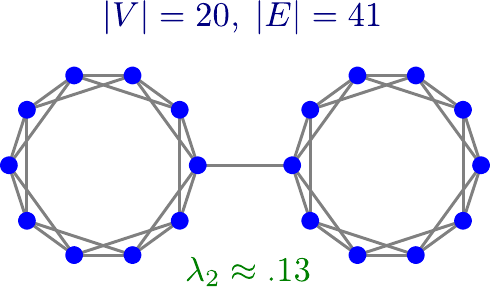} \qquad
  \includegraphics[scale=1.2]{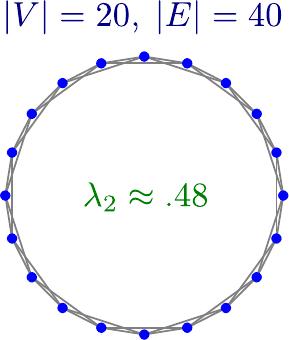}
  \caption{Examples of values of $\lambda_{2}$ for two graphs. Notice that even though the right graph has more edges, its algebraic connectivity $\lambda_2$ is lower.}
  \label{fig:graph1}
\end{figure}

\begin{figure}[p]
  \centering
  \includegraphics[width=.47\textwidth]{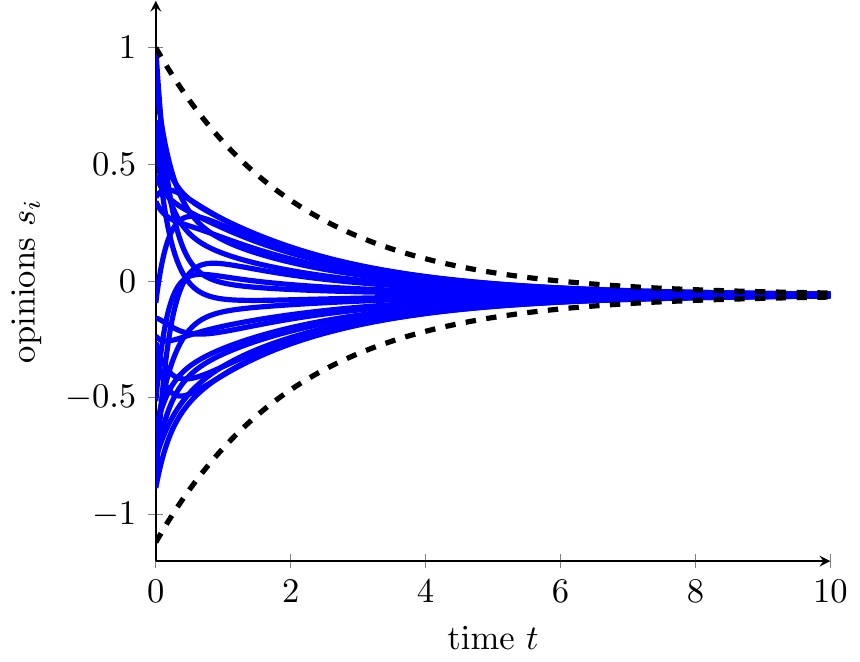}
  \includegraphics[width=.47\textwidth]{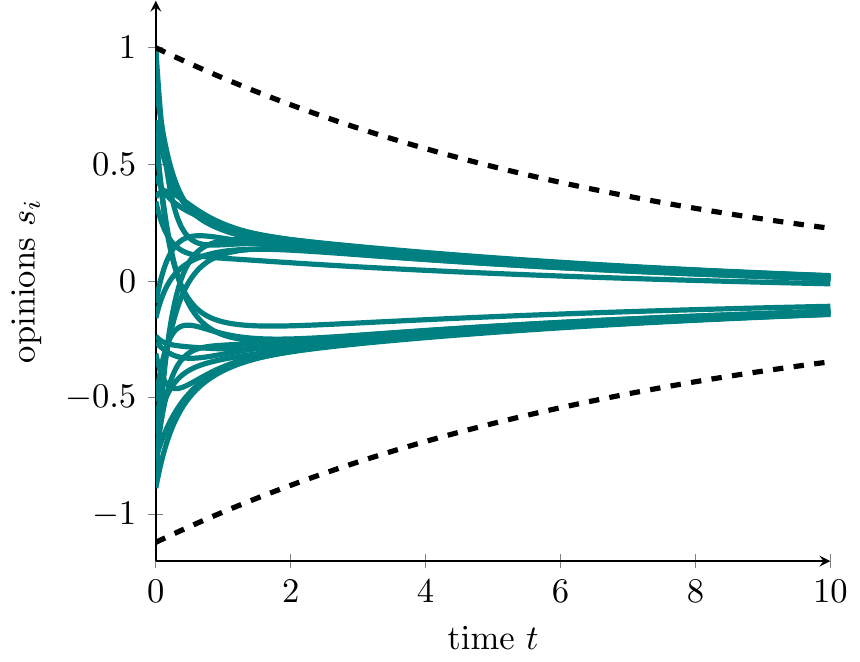}
  \caption{Evolution of opinions $s_i(t)$ over time for the two graphs of Fig.~\ref{fig:graph1}. The dash line indicates a decay with rate $\lambda_{2}$ (worst-case scenario).}
  \label{fig:opinion_over_time}
\end{figure}

\begin{figure}[p]
  \centering
  \includegraphics[width=.47\textwidth]{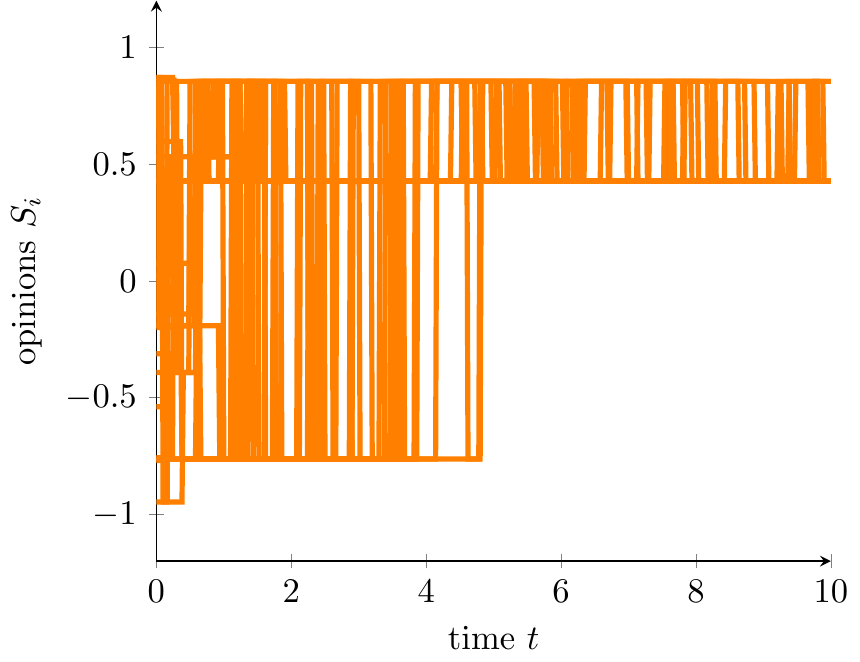}
  \includegraphics[width=.47\textwidth]{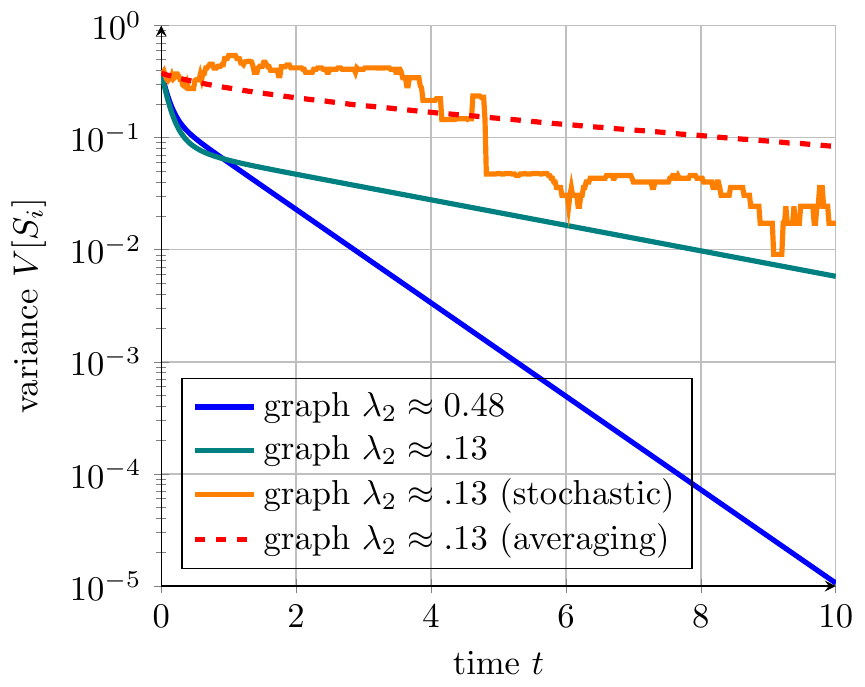}
  \caption{{\bf Left:} Evolution of opinions $S_i(t)$ for the stochastic dynamics on the graph with $\lambda_2\approx .13$. After $t=5$, the opinions $S_i(t)$ are jumping back and force between two values. They will eventually converge to a single value due to theorem \ref{thm:cv_consensus_SCM}. {\bf Right:} Evolution of the variance of opinions $V[S_i]$ for the three illustrations (i.e. Fig.~\ref{fig:opinion_over_time} and Fig.~\ref{fig:variance_over_time}-left). The decay is faster when $\lambda_2$ is larger. Moreover, we estimate the average decay of the variance for the stochastic dynamics $\mathbb{E}[V]$ (using 10,000 realization) plotted in red.}
  \label{fig:variance_over_time}
\end{figure}

\subsection{Outside of consensus - multiple isolated blocks}

Here we examine numerically the effects of more then one isolated block on the long-term behavior of both models.  We set up the initial opinions so that each isolated block is at each extreme of the possible spectrum of opinions and that opinions in the central component are randomly distributed uniformly among the possible opinions.

\begin{figure}[ht!]
    \centering
    \includegraphics[scale = .7]{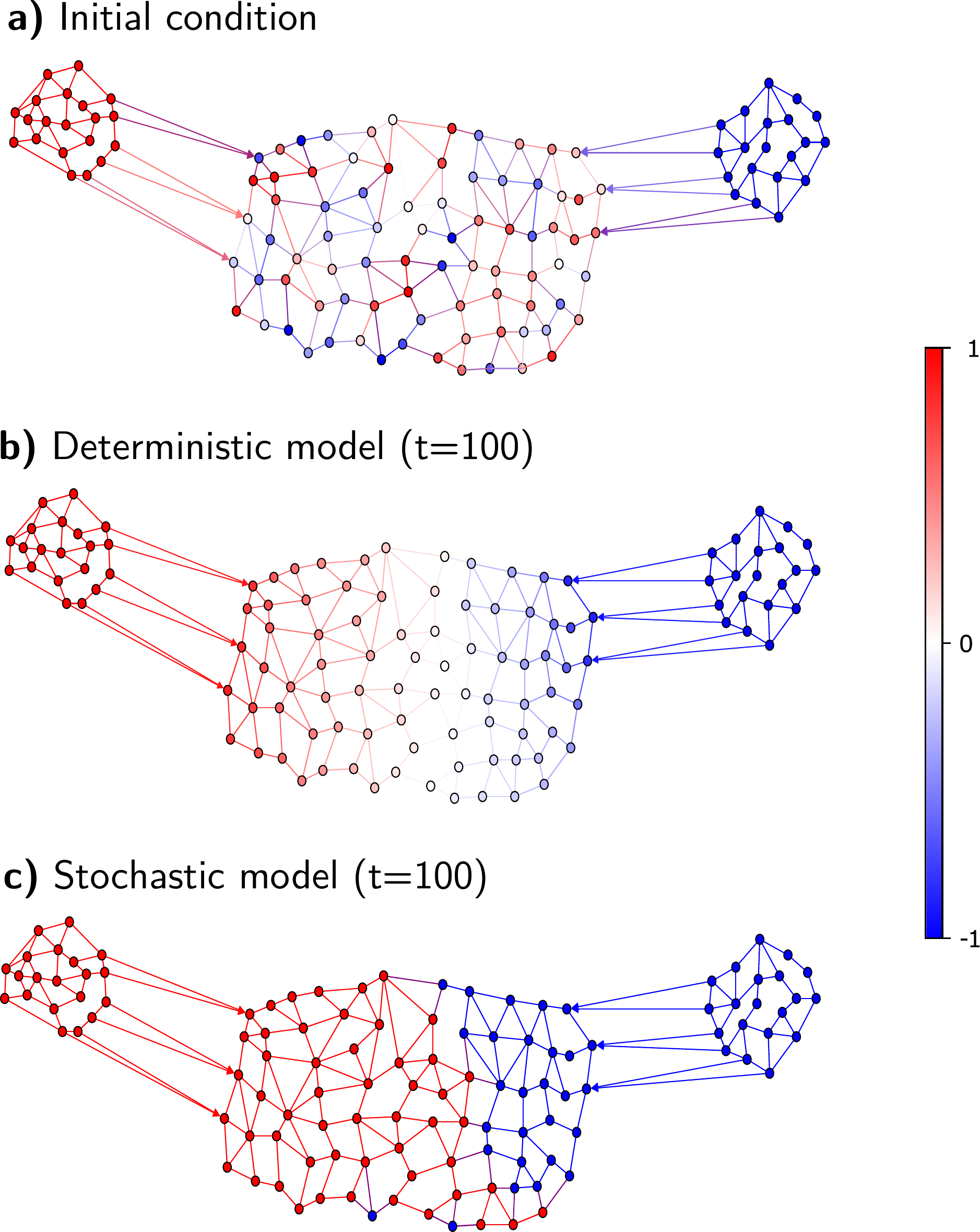}
    \caption{a) Initial state of the network in the "battle" scenario. b) The final state of the network in the "battle" scenario for the deterministic model.  Nodes close to isolated blocks exhibit an opinion close to that of the isolated block. c) The state of the network at $t=100$ for the stochastic model.}
    \label{fig:big_fight}
\end{figure}

\begin{figure}[ht!]
    \centering
    \includegraphics[scale = .7]{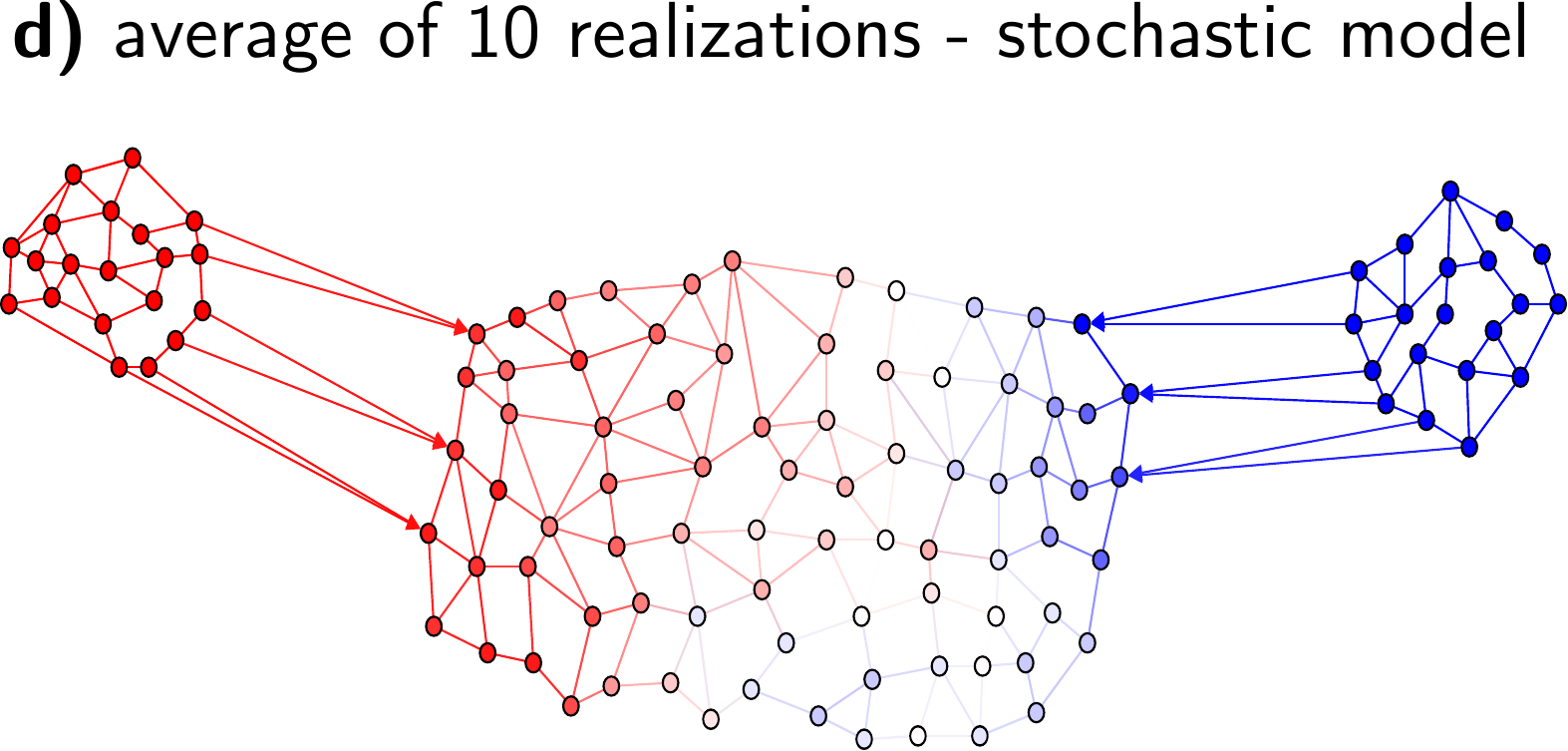}
    \caption{d) The average of $10$ realizations at $t=100$.  Notice that analogous to the deterministic case, that nodes close to an isolated block have an opinion close to the opinion of the isolated block.}
    \label{fig:big_fight_d}
\end{figure}

After running a simulation of the dynamics we find that the middle component now exhibits a ``gradient'' of the possible opinions.  Nodes close to the blue isolated block in the sense of graph distance exhibit an opinion close to that of the isolated block, likewise for nodes near the red isolated block.  Nodes in the middle of the component exhibit an opinion close to neutral.  This shows that in the case of multiple isolated blocks that the graph distance of a node in an absorbing component from isolated blocks plays a role in determining the final opinion of that node.


We now examine the analogous situation for the stochastic model using the same initial condition. We have already seen in the theory of the model that the agents in the central component cannot converge  to a limiting opinion distribution as there are multiple isolated blocks in the network.  However, to get an idea of how the network evolves in the stochastic case we show the state at $t=100$.

Notice that similar to the deterministic case that it appears that nodes closer to an isolated block in the graph sense are likely to have the same opinion as that isolated block.  However, this could very well be an artifact of the realization shown.  To see that this phenomenon does indeed occur in the stochastic case we recall that we have shown that while the dynamics do not converge in this case, they are the same as the deterministic dynamics \textit{in expected value}.

 If we take the mean of 10 realizations at $t=100$ we recover a gradient of opinions in the central component similar to what we found in the deterministic case.  This indicates that indeed if a node is closer to an isolated block in the graph sense then it is more likely to have the opinion of that isolated block and serves to illustrate that the dynamics of the stochastic model coincide with the deterministic model in expected value.


\section{Conclusion and future work}

We have discussed consensus models in both a deterministic and stochastic setting.  In both cases we have completely characterized the convergence of the model. In the deterministic case we find that the model always converges although not necessarily to a consensus; this contrasts with the stochastic case where we find that the model only converges almost surely if and only if the network has only one isolated block.  We have also given conditions that guarantee unconditional convergence to a consensus - isolated blocks play a critical and identical role in both cases.
We find that for both models unconditional convergence to a consensus is equivalent to the network on which the model is defined having only one isolated block.  This similarity is explained by the fact that the deterministic dynamics can be recovered from the stochastic dynamics in expectation.  Finally, we show in the deterministic case that the rate of convergence depends only on the algebraic connectivity of the network - the stronger the connectivity the faster the model converges.  We find a similar result in the stochastic model in the case of symmetric networks however in this case we see that the rate of convergence is much slower as it also depends on the number of agents.

There are many ways in which this study may be extended.  We hope to make our assumptions about agent-agent interactions more subtle through adding non-linearities to the model.  First, in this paper we assume that the network is static; the connections between individuals do not change with respect to time.  To further generalize the consensus model we could study its behavior on graphs in which the connections between individuals are evolving with time and depend on the current opinions of individuals. A consensus model assumes that if two individuals interact for a sufficient amount of time they will reach a consensus,  however everyday experience tells us that this is not always (or even often) the case.  These considerations motivate the study of the following class of models:
\begin{equation}
  \label{eq:bidon}
  s_{i}'(t) = \sum_{j \neq i}a_{ij}(t,s_{i},s_{j})\phi(s_{j}(t),s_{i}(t)).
\end{equation}
$\phi$ is known as an \textit{influence function} and dictates how the opinion of individual $i$ changes based on its current value and the opinion of those who that individual is connected with.
We also note that the behavior exhibited by the consensus model is reminiscent to that of a transport-diffusion model in the case of ``grid-network'' (for example those studied in Section 4.2).  We hope to characterize this behavior through passing to a continuous description of the graph to obtain in a certain limit a partial differential equation describing the dynamics.  One possible avenue to this description would be through a kinetic description of the model combined with a coarsening approach \cite{dornic_critical_2001, herty_large_2014}. However, as it has been observed \cite{carlen_kinetic_2013}, propagation of chaos is not always applicable to these dynamics which could hinder this approach.

%

Empirical studies of models of collective behavior outside of physics have traditionally been difficult due to lack of data and complexity of the interacting agents.  However, progress has been made in biological models \cite{ballerini_empirical_2008, buhl_disorder_2006} and we believe that the advent of social media and the simplicity of the consensus model make an empirical study of consensus dynamics feasible. Ideally the graph of interaction will be constructed from a source of real-world networks; for example, the meta data associated with social media \cite{estrada_communicability_2015, kempe_maximizing_2003}.  A possible goal of such a study would be to confirm the phenomenon found in this study; connectivity in the graph is a main factor in determining how effectively influence spreads through a network.


\appendix

\section{Appendix}

\subsection{Convergence of linear systems}
\label{sec:cv_linear}
\begin{lem}
  Given a linear system defined by:
  \begin{displaymath}
    \vect{x'}=A\vect{x} \quad\vect{x}(0)=\vect{x}_{0}
  \end{displaymath}
  Assume $A$ has $d$ distinct eigenvalues and a zero eigenvalue of multiplicity $m$ with $m$ linearly independent associated eigenvectors. If for all $\lambda_{i}\in\text{Spec}(A)$ with $i>m$ we have that $Re(\lambda_{i})<0$ then
  \begin{displaymath}
    \lim_{t\rightarrow\infty}\vect{x}(t)=\vect{u}
  \end{displaymath}
  where $\vect{u}$ is in the center subspace of $A$, $E^{c}$.  
\end{lem}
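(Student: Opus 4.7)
The plan is to exploit the Jordan decomposition of $A$, using crucially the hypothesis that the zero eigenvalue is semisimple (algebraic multiplicity equals geometric multiplicity). Since the solution of the system is $\vect{x}(t) = e^{tA}\vect{x}_0$, the question reduces to understanding the long-time behavior of the matrix exponential $e^{tA}$.

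First I would write $A = PJP^{-1}$ with $J$ in Jordan normal form. Because the zero eigenvalue admits $m$ linearly independent eigenvectors, each of its Jordan blocks is $1\times 1$, so we can arrange
\begin{displaymath}
  J = \begin{pmatrix} 0_{m\times m} & \\ & \widetilde{J} \end{pmatrix},
\end{displaymath}
where $\widetilde{J}$ collects the Jordan blocks associated with the nonzero eigenvalues $\lambda_i$, all of which satisfy $\mathrm{Re}(\lambda_i)<0$. This gives the $A$-invariant splitting $\R^n = E^c \oplus E^s$, where $E^c = \ker A$ is the center subspace and $E^s$ is the sum of generalized eigenspaces corresponding to the $\lambda_i$ with $\mathrm{Re}(\lambda_i)<0$.

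Next I would compute $e^{tA} = P e^{tJ} P^{-1}$ block-by-block. On the zero block, $e^{tJ}$ is simply the identity $I_m$ for all $t$. On each nonzero Jordan block $J_k = \lambda_k I + N_k$ with $N_k$ nilpotent, one has $e^{tJ_k} = e^{\lambda_k t} \sum_{\ell=0}^{r_k-1} \frac{t^\ell}{\ell!} N_k^\ell$, which is a polynomial in $t$ times $e^{\lambda_k t}$. Since $\mathrm{Re}(\lambda_k)<0$, choosing any $0<\varepsilon < \min_k|\mathrm{Re}(\lambda_k)|$ one obtains the bound $\|e^{tJ_k}\|\leq C_k\, e^{-(|\mathrm{Re}(\lambda_k)|-\varepsilon)t}$, which tends to $0$ exponentially fast. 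Consequently $e^{t\widetilde{J}} \to 0$, and thus
\begin{displaymath}
  e^{tJ} \longrightarrow \Pi := \begin{pmatrix} I_m & \\ & 0 \end{pmatrix}
\end{displaymath}
as $t\to+\infty$, so that $e^{tA}\to P\Pi P^{-1}$. The limit operator $P\Pi P^{-1}$ is precisely the spectral projector onto $E^c$ along $E^s$.

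Finally, setting $\vect{u} := P\Pi P^{-1}\vect{x}_0$, one has $\vect{x}(t)\to\vect{u}$, and by construction $\vect{u}\in E^c$, giving the claim. The estimate above also furnishes the quantitative rate $\|\vect{x}(t)-\vect{u}\|\leq C e^{-(\alpha-\varepsilon)t}$ with $\alpha = \min_k|\mathrm{Re}(\lambda_k)|$ that is invoked in the remark following the theorem. The only delicate point is the semisimplicity hypothesis on the zero eigenvalue: without it, $e^{tJ}$ on the zero block would contain polynomial factors in $t$ that do not decay and the solution would grow rather than converge. Everything else amounts to routine Jordan-form bookkeeping.
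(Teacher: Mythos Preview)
Your proposal is correct and follows essentially the same approach as the paper's proof: both invoke the Jordan/generalized eigenspace decomposition, use the semisimplicity hypothesis on the zero eigenvalue to conclude that $e^{tA}$ acts as the identity on $E^c$, and bound the remaining blocks via the standard estimate $\|e^{t(\lambda_k I + N_k)}\|\leq C e^{(\mathrm{Re}(\lambda_k)+\varepsilon)t}\to 0$. The only cosmetic difference is that the paper works with the restrictions $A|_{E_{\lambda_i}}$ and decomposes the initial condition as $\vect{x}_0=\sum_i e_i$, whereas you pass through the similarity $A=PJP^{-1}$ and track the matrix $e^{tJ}$; these are two presentations of the same computation.
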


\begin{proof}
 For ease of notation we will write
  \begin{align*}
          \text{Spec}(A) = \set{\lambda_{i},..,\lambda_{d}}
 \end{align*}
 where $m_{i}$ is the algebraic multiplicity of $\lambda_{i}$. We will also write that $\lambda_{1} = 0$. We know that given $\lambda_{i}$ that we may find $m_{i}$ linearly independent generalized eigenvectors of $A$, $\set{{\bf v}_{\lambda_{i}}^{1},...,{\bf v}_{\lambda_{i}}^{m_{i}}}$.  The generalized eigenspace corresponding to $\lambda_{i}$ is given by $E_{\lambda_{i}} = \text{span}\set{{\bf v}_{\lambda_{i}}^{1},...,{\bf v}_{\lambda_{i}}^{m_{i}}}$ and by the generalized eigenspace decomposition theorem
we can find a basis of $\Rn$ consisting of generalized eigenvectors of $A$.  Therefore, we may write:

\begin{equation}\label{directsum}
     \Rn = \bigoplus_{i=1}^{d}E_{\lambda_{i}}.
\end{equation}
 We know by the fundamental theorem of linear systems that the solution to the system is given by:
 \begin{align*}
     \vect{x}(t) = e^{At}\vect{x}_{0}.
 \end{align*}
 By (\ref{directsum}) we may write:
 \begin{align*}
 \vect{x}_{0} = {\bf e}_{1}+...+{\bf e}_{d}
 \end{align*}
 where for each $1\leq i \leq d$ we have ${\bf e}_{i}\in E_{\lambda_{i}}$.  Notice that since we are given that there are $m_{1}$ distinct eigenvectors associated with $\lambda_{1} = 0$ that for each $w\in E_{\lambda_{1}}$ we have that $A{\bf w}=0$. Consequently we have that $A_{|E_{\lambda_{1}}}=0$ which implies that $A_{|E_{\lambda_{1}}}=0$ for each $t\in\R$.
Taking the matrix exponential of both sides yields
 \begin{align*}
     e^{tA_{|E_{\lambda_{1}}}}&=\text{Id}\quad\text{for each $t\in\R$.}\\
 \end{align*}
 Therefore we must have that
 \begin{align*}
     e^{At}\vect{x}_{0} &= e^{At}{\bf e}_{1}+...+e^{At}{\bf e}_{d} \\                        &=e^{tA_{|E_{\lambda_{1}}}}{\bf e}_{1}+...+e^{tA_{|E_{\lambda_{d}}}}{\bf e}_{d}\\                        &={\bf e}_{1}+e^{tA_{|E_{\lambda_{2}}}}{\bf e}_{2}+...+e^{tA_{|E_{\lambda_{d}}}}{\bf e}_{d}.
 \end{align*}
 We now claim that
 \begin{align*}
    \lim_{t\rightarrow\infty}e^{tA_{|E_{\lambda_{i}}}}{\bf e}_{i}=0.
\end{align*}
  for each $1< i\leq d$. Since we are writing $\vect{x_{0}}$ with respect to the basis of generalized eigenvectors of A we have that
 \begin{align*}
     A_{|E_{\lambda_{i}}} = \lambda_{i}Id + N\\
 \end{align*}
 where N is nilpotent. Consequently
 \begin{align*}
     e^{tA_{|E_{\lambda_{i}}}}e_{i} &= e^{t(\lambda_{i}\text{Id} +N)}e_{i} \\
     &=e^{t\text{Re}(\lambda_{i})}e^{t\text{Im}(\lambda_{i})i}(Id+Nt+...+\frac{1}{k!}t^{k}A^{k})e_{i}\\
     &\leq C_{1}e^{t\text{Re}(\lambda_{i})}(Id+Nt+...+\frac{1}{k!}t^{k}A^{k})e_{i}
 \end{align*}
 with $C_1>0$.  Therefore, since $\text{Re}(\lambda_{i})<0$ and every coordinate of $(Id+Nt+...+\frac{1}{k!}t^{k}A^{k}){\bf e}_{i}$ is polynomial in $t$ we must have that there exist $C>0$ and $0<\epsilon<-\text{Re}(\lambda_{i})$ such that
 \begin{equation} \label{eq:convergence_speed}
     \norm{e^{tA_{|E_{\lambda_{i}}}}{\bf e}_{i}} \leq Ce^{(\text{Re}(\lambda_{i})+\epsilon)t}\rightarrow 0 \quad\text{as}\quad t\rightarrow\infty.
 \end{equation}
 So we have that
 \begin{align*}
     \lim_{t\rightarrow\infty}e^{tA_{|E_{\lambda_{i}}}}{\bf e}_{i}=0\quad 1<i\leq d,
 \end{align*}
which implies
\begin{align*}
    \lim_{t\rightarrow\infty}e^{At}\vect{x_{0}} = \lim_{t\rightarrow\infty}&={\bf e}_{1}+e^{tA_{|E_{\lambda_{2}}}}{\bf e}_{2}+...+e^{tA_{|E_{\lambda_{d}}}}{\bf e}_{d} = {\bf e}_{1}.
\end{align*}
As ${\bf e}_{1}\in E^{c}$ by definition we have that $\lim_{t\rightarrow\infty}\vect{x}\in E^{c}$ as desired.
\end{proof}

\subsection{Decomposition into strongly connected components}
\label{sec:frobenius}

\begin{lem}
  If L is the Laplacian of a directed graph $G=(V,E)$ then by relabeling vertices L can be represented:
\end{lem}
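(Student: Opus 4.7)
The plan is to exploit the fact that the strongly connected components (SCCs) of $G$ form the vertex set of a directed acyclic graph, which admits a topological ordering. The relabeling of the vertices of $G$ will then be the one induced by grouping vertices SCC-by-SCC, in the order produced by the topological sort.

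First I would recall that the relation $i \sim j$ and $j \sim i$ is an equivalence relation on $V$; call the equivalence classes $C_{1},\dots,C_{d}$ (the strongly connected components). I would then form the \emph{condensation graph} $G'=(V',E')$ whose vertex set is $\{C_{1},\dots,C_{d}\}$, with a directed edge $C_{k}\to C_{\ell}$ (for $k\neq\ell$) whenever there exist $i\in C_{k}$ and $j\in C_{\ell}$ with $(i,j)\in E$. The first small lemma to verify is that $G'$ is acyclic: a cycle $C_{k_{1}}\to C_{k_{2}}\to\cdots\to C_{k_{r}}\to C_{k_{1}}$ in $G'$ would produce a directed path in $G$ from any vertex in $C_{k_{1}}$ back to itself passing through representatives of each $C_{k_{s}}$, so all these vertices would be mutually reachable, contradicting that distinct $C_{k}$'s are disjoint equivalence classes.

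Since $G'$ is a finite DAG, a standard topological sort gives an ordering of its vertices, which I relabel as $C_{1},\dots,C_{d}$, such that whenever $C_{k}\to C_{\ell}$ is an edge of $G'$ we have $k<\ell$. I then relabel the vertices of $G$ consecutively: first all vertices belonging to $C_{1}$, then all of $C_{2}$, and so on, finishing with $C_{d}$. Writing $L$ in this new ordering, the rows and columns are partitioned into blocks $B_{1},\dots,B_{d}$ corresponding to the SCCs. For indices $i\in C_{\ell}$ and $j\in C_{k}$ with $k\neq \ell$, the entry $L_{ij}$ is nonzero only when $a_{ij}>0$, i.e.\ when $j$ influences $i$, which corresponds to an edge $C_{k}\to C_{\ell}$ in $G'$ and therefore, by the topological order, forces $k<\ell$. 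Hence every nonzero off-block entry $L_{ij}$ satisfies (block index of $i$) $>$ (block index of $j$), which is exactly the statement that $L$ is block lower triangular with diagonal blocks $B_{1},\dots,B_{d}$. Each diagonal block $B_{k}$ is the restriction of $L$ to the indices of a strongly connected component, and by construction of $C_{k}$ any two such indices communicate through a path, so $B_{k}$ is irreducible.

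The only real obstacle is the bookkeeping of the acyclicity argument and the direction of the resulting inequality between block indices; once the topological ordering of $G'$ is fixed, the block form \eqref{eq:frobenius_normal_form} follows directly from the definition of $L_{ij}$. Irreducibility of the diagonal blocks is an immediate consequence of the defining property of an SCC, so no additional work is needed there.
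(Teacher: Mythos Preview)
Your argument is correct and follows essentially the same route as the paper: form the condensation of $G$ into strongly connected components, observe that this graph is a DAG, take a topological ordering, and relabel the vertices of $G$ block-by-block. Your write-up is in fact more detailed than the paper's, since you spell out explicitly why the off-block entries lie below the diagonal and why each diagonal block is irreducible.
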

\begin{align*}
  L=
  \begin{bmatrix}
    B_{1}                                    \\
    &  B_{2}             &   & \text{\huge0}\\
    &               & B_{3}                \\
    & \text{\huge*} &   & \ddots            \\
    &               &   &   & B_{k}
  \end{bmatrix}
\end{align*}
\begin{proof}
  We may partition the vertex set of G, $V$, into its strongly connected components $\set{U_{1},\dots,U_{k}}$, so that:
  \begin{align*}
    V=\uplus_{i}^{k}U_{i}.
  \end{align*}
  If we consider the set $\set{U_{1},\dots,U_{k}}$ as the vertex set of a new graph $G^{*}$ with edge set $E^{*}$ given by:
  \begin{align} \label{big_edges}
    (U_{m},U_{n})\in E^{*} \text{ if there exists } u\in U_{m} \text{ and } v\in U_{n}\quad\text{with}\quad (u,v)\in E.
  \end{align}
  Then the graph $G^{*}$ is a directed acyclic graph as $G$ has been partitioned into strongly connected components; if a cycle existed all vertices included in the cycle would represent the same strongly connected component of $G$ by \eqref{big_edges} contradicting the partition of $V$ into strongly connected components. Therefore there exists a topological ordering $\leq^{*}$ on the vertex set of $G^{*}$.  This ordering is given by:
  \begin{align*}
    U_{m}\leq^{*}U_{n}\quad\text{if}\quad(U_{m},U_{n})\in E^{*}.
  \end{align*}
  We only need to label the vertices of $V$ in such a way that they respect the topological ordering. Then, this labeling of $V$ produces $L$ in the desired form.
\end{proof}


\end{document}